\numberwithin{equation}{section}
\theoremstyle{plain}
\newtheorem{theorem}{Theorem}[section]
\newtheorem{proposition}{Proposition}
\newtheorem{lemma}{Lemma}
\theoremstyle{plain}
\newtheorem{definition}{Definition}
\newcommand{\1}{\ensuremath{\mathbbm{1}}}
\newcommand{\rme}{\mathrm{e}}
\newcommand{\rmd}{\mathrm{d}}
\newcommand{\N}{\mathbb{N}}
\newcommand{\Z}{\mathbb{Z}}
\newcommand{\R}{\mathbb{R}}
\newcommand{\EE}{\mathbb{E}}
\newcommand{\K}{\mathcal{K}}
\newcommand{\btheta}{\bm\theta}
\newenvironment{hypothesis}[1]{
\begin{enumerate}[label={\sf(\textbf{#1}-\arabic*)},resume=hypothesis#1]\begin{sf}}
{\end{sf}\end{enumerate}}
\newenvironment{hypothesis*}[1]{
\begin{enumerate}[label={\sf(\textbf{#1})}]\begin{sf}}
{\end{sf}\end{enumerate}}
\begin{document}

\title{Time series prediction via aggregation: an oracle bound including numerical cost}

\author{Andres Sanchez-Perez\\
\texttt{andres.sanchez-perez@telecom-paristech.fr}\\
Institut Mines-T\'el\'ecom ; T\'el\'ecom ParisTech ; CNRS LTCI}


\maketitle

\begin{abstract}
We address the problem of forecasting a time series meeting the Causal
Bernoulli Shift model, using a parametric set of predictors. The aggregation technique provides a predictor with well established and quite satisfying theoretical properties expressed by an oracle inequality for the prediction risk. The numerical computation of the aggregated predictor usually relies on a Markov chain Monte Carlo method whose convergence
should be evaluated. In particular, it is crucial to bound the number of
simulations needed to achieve a numerical precision of the same order as the
prediction risk. In this direction we  present a fairly general result which can be seen as an oracle inequality including the numerical cost of the predictor computation. The numerical cost appears by letting the oracle inequality depend on the number of simulations required in the Monte Carlo approximation. Some numerical experiments are then carried out to support
our findings.
\end{abstract}

\section{Introduction}
The objective of our work is to forecast a stationary time series $Y=\left(Y_{t}\right)_{t\in\Z}$ taking values in $\mathcal{X}\subseteq\R^{r}$ with $r\geq 1$. For this purpose we propose and study an aggregation scheme using exponential weights.

Consider a set of individual predictors giving their predictions at each moment $t$. An aggregation method consists of building a
new prediction from this set, which is nearly as good as the best among the individual ones, provided a
risk criterion (see \cite{Leung_Barron:2006}). This kind of result is established by oracle inequalities. The power and the beauty of the technique lie in its simplicity and versatility. The more basic and general context of application is individual sequences, where no assumption on the observations is made (see \cite{Cesa-Bianchi_Lugosi:2006} for a comprehensive overview). Nevertheless, results need to be adapted if we set a stochastic model on the observations. 

The use of exponential weighting in aggregation and its links with the PAC-Bayesian approach has been investigated for example in \cite{Audibert:2004}, \cite{Catoni:2004} and \cite{Dalalyan_Tsybakov:2008}. Dependent processes have not received much attention from this viewpoint, except in \cite{Alquier_Li:2012} and \cite{Alquier_Wintenberger:2012}. In the present paper we study the properties of the Gibbs predictor, applied to Causal Bernoulli Shifts (CBS). CBS are an example of dependent processes (see \cite{Dedecker_Doukhan_Lang:2007} and \cite{Dedecker_Prieur:2005}).

Our predictor is expressed as an integral since the set from which we do the aggregation is in general not finite. Large dimension is a trending setup and the computation of this integral is a major issue. We use classical Markov chain Monte Carlo (MCMC) methods to approximate it. Results from {\L}atuszy{\'n}ski
\cite{Latuszynski_Miasojedow_Niemiro:2013}, \cite{Latuszynski_Niemiro:2011} control the number of MCMC iterations to obtain precise bounds for the approximation of the integral. These bounds are in expectation and probability with respect to the distribution of the underlying Markov chain.

In this contribution we first slightly revisit certain lemmas presented in \cite{Alquier_Wintenberger:2012}, \cite{Catoni:2004} and \cite{Rio:2000} to derive an oracle bound for the prediction risk of the Gibbs predictor. We stress that the inequality controls the convergence rate of the exact predictor. Our second goal is to investigate the impact of the approximation of the predictor on the convergence guarantees described for its exact version. Combining the PAC-Bayesian bounds with the MCMC control, we then provide an oracle inequality that applies to the
MCMC approximation of the predictor, which is actually used in practice. 

The paper is organised as follows: we introduce a motivating example and several definitions and assumptions in Section~\ref{section:statement_of_the_problem_and_notation}. In Section~\ref{section:prediction_via_aggregation} we describe the methodology of aggregation and provide the oracle inequality for the exact Gibbs predictor. The stochastic approximation is studied in Section~\ref{section:stochastic_approximation}. We state a general proposition independent of the model for the Gibbs predictor. Next, we apply it to the more particular framework delineated in our paper. A concrete case study is analysed in Section~\ref{section:numerical_work}, including some numerical work. A brief discussion follows in Section~\ref{section:discussion}. The proofs of most of the results are deferred to Section~\ref{section:technical_proofs}.

Throughout the paper, for $\bm a\in\R^{q}$ with $q\in\N^{*}$, $\|\bm a\|$ denotes its Euclidean norm, $\|\bm a\|=(\sum_{i=1}^{q}a_{i}^{2})^{1/2}$ and $\|\bm a\|_{1}$ its $1$-norm  $\|\bm a\|_{1}=\sum_{i=1}^{q}|a_{i}|$. We denote, for $\bm a\in\R^{q}$ and $\Delta>0$, $B\left(\bm a,\Delta\right)=\{\bm a_{1}\in\R^{q}: \|\bm a-\bm a_{1}\|\leq\Delta\}$ and $B_{1}\left(\bm a,\Delta\right)=\{\bm a_{1}\in\R^{q}: \|\bm a-\bm a_{1}\|_{1}\leq\Delta\}$ the corresponding balls centered at $\bm a$ of radius $\Delta>0$.
In general bold characters represent column vectors and normal characters their components; for example $\bm y=\left(y_{i}\right)_{i\in\Z}$. The use of subscripts with `:' refers to certain vector components $\bm y_{1:k}=\left(y_{i}\right)_{1\leq i\leq k}$, or elements of a sequence $X_{1:k}=\left(X_{t}\right)_{1\leq t\leq k}$. For a random variable $U$ distributed as $\nu$ and a measurable function $h$, $\nu[h(U)]$ or simply $\nu[h]$ stands for the expectation of $h(U)$: $\nu[h]=\int h(u)\nu(\rmd u)$.

\section{Problem statement and main assumptions} \label{section:statement_of_the_problem_and_notation}
Real stable autoregressive processes of a fixed order, referred to as AR$(d)$ processes, are one of the simplest examples of CBS. They are defined as the stationary solution of 
\begin{eqnarray}
X_t &=& \sum\limits_{j=1}^{d}\theta_{j}X_{t-j}+\sigma\xi_{t} \label{equation:AR_model} \;,
\end{eqnarray}
where the $(\xi_{t})_{t\in\Z}$ are i.i.d. real random variables with $\EE[\xi_{t}]=0$ and $\EE[\xi_{t}^{2}]=1$.

We dispose of several efficient estimates for the parameter $\btheta=\begin{bmatrix}
\theta_{1} &\dots& \theta_{d} \end{bmatrix}^\prime$ which can be calculated via simple algorithms as Levinson-Durbin or Burg algorithm for example. From them we derive also efficient predictors. However, as the model is simple to handle, we use it to progressively introduce our general setup. 

Denote 
$$
A\left(\btheta\right) =
\begin{bmatrix}
  \theta_1 &\theta_2 &\dots&\dots &\theta_d \\
  1 &0&\dots&\dots&0 \\
  0 &1&0 &\ddots&0 \\
  \vdots&0&\ddots&\ddots&\vdots\\
  0&\dots&0&1&0
\end{bmatrix}\;,
$$
$\boldsymbol{X}_{t-1}=\begin{bmatrix}
X_{t-1} &\dots& X_{t-d} \end{bmatrix}^\prime$ and $\bm e_{1}=\begin{bmatrix}
1 & 0 &\dots& 0\end{bmatrix}^\prime$ the first canonical vector of $\R^{d}$. $M'$ represents the transpose of matrix $M$ (including vectors). The recurrence (\ref{equation:AR_model}) gives
\begin{eqnarray}
X_{t} = \btheta'\boldsymbol{X}_{t-1} + \sigma\xi_{t} = \sigma\sum\limits_{j=0}^{\infty}\bm e'_{1}A^{j}\left(\btheta\right)\bm e_{1}\xi_{t-j}\;. \label{equation:AR_X_xi}
\end{eqnarray}
The eigenvalues of $A\left(\btheta\right)$ are the inverses of the roots of the autoregressive polynomial $\btheta\left(z\right) = 1-\sum_{k=1}^{d}\theta_{k}z^{k}$, then at most $\delta$ for some $\delta\in\left(0,1\right)$ due to the stability of $X$ (see \cite{Brockwell_Davis:2006}). In other words $\btheta\in s_{d}\left(\delta\right)=\{\btheta~:~\btheta\left(z\right)\neq0\;\mbox{for}\;|z|<\delta^{-1}\}\subseteq s_{d}\left(1\right)$. In this context (or even in a more general one, see \cite{Kunsch:1995}) for all $\delta_{1}\in(\delta,1)$ there is a constant $\bar{K}$ depending only on $\btheta$ and $\delta_{1}$ such that for all $j\geq 0$
\begin{eqnarray}
\left|\bm e'_{1}A^{j}\left(\btheta\right)\bm e_{1}\right| \leq \bar{K}\delta_{1}^{j}\;, \label{equation:bound_AR_coefficient}
\end{eqnarray}
and then, the variance of $X_{t}$, denoted $\gamma_{0}$, satisfies 
$\gamma_{0} = \sigma^{2}\sum_{j=0}^{\infty}|\bm e'_{1}A^{j}\left(\btheta\right)\bm e_{1}|^{2} \leq \bar{K}^{2}\sigma^{2}/(1-\delta_{1}^{2})$.

The following definition allows to introduce the process which interests us. 
\begin{definition}
Let $\mathcal{X}'\subseteq\R^{r'}$ for some $r'\geq1$ and let $A=(A_{j})_{j\geq 0}$ be a sequence of non-negative numbers. A function $H: (\mathcal{X}')^{\N}\rightarrow\mathcal{X}$ is said to be $A$-Lipschitz if 
\begin{eqnarray*}
\|H\left(\bm u\right)-H\left(\bm v\right)\| &\leq& \sum\limits_{j=0}^{\infty}A_{j}\|u_{j}-v_{j}\|\;,
\end{eqnarray*}
for any $\bm u=(u_{j})_{j\in\N},
\bm v=(v_{j})_{j\in\N}\in(\mathcal{X}')^{\N}$.
\end{definition}

Provided $A=(A_{j})_{j\geq 0}$ with $A_{j}\geq0$ for all $j\geq0$, the i.i.d. sequence of
$\mathcal{X}'$-valued random variables $\left(\xi_{t}\right)_{t\in\Z}$ and $H:
(\mathcal{X}')^{\N}\rightarrow\mathcal{X}$, we consider that a time series $X~=~\left(X_{t}\right)_{t\in\Z}$ admitting the following property is a Causal Bernoulli Shift (CBS) with Lipschitz coefficients $A$ and innovations $\left(\xi_{t}\right)_{t\in\Z}$.
  
\begin{hypothesis*}{M}  
\item \label{hyp:CBS} The process $X=\left(X_{t}\right)_{t\in\Z}$ meets the
  representation
\begin{eqnarray*}
X_{t} &=& H\left(\xi_{t},\xi_{t-1},\xi_{t-2},\ldots\right), \forall t\in\Z\;,
\end{eqnarray*}
where $H$ is an $A$-Lipschitz function with the sequence $A$ satisfying
\begin{eqnarray}
\tilde{A}_{*} &=& \sum_{j=0}^{\infty}jA_{j} < \infty \;. \label{equation:tilde_A} 
\end{eqnarray}
We additionally define 
\begin{equation}
  A_{*} =\sum_{j=0}^{\infty}A_{j}\;. \label{equation:A}
\end{equation}
\end{hypothesis*}

CBS regroup several types of nonmixing stationary Markov chains, real-valued functional autoregressive models and Volterra processes, among other interesting models (see \cite{Coulon-Prieur_Doukhan:2000}). Thanks to the representation (\ref{equation:AR_X_xi}) and the inequality (\ref{equation:bound_AR_coefficient})  we assert that AR$(d)$ processes are CBS with $A_{j}=\sigma\bar{K}\delta_{1}^{j}$ for $j\geq 0$.

We let $\xi$ denote a random variable distributed as the $\xi_{t}$s. Results from \cite{Alquier_Li:2012} and \cite{Alquier_Wintenberger:2012} need a control on the exponential moment of $\xi$ in $\zeta=A_{*}$, which is provided via the following hypothesis. 
\begin{hypothesis*}{I}  
\item \label{hyp:exponential-moment} The innovations $\left(\xi_{t}\right)_{t\in\Z}$ satisfy $\phi(\zeta)=\EE\left[\rme^{\zeta\|\xi\|}\right]<\infty$.
\end{hypothesis*}
Bounded or Gaussian innovations trivially satisfy this hypothesis for any $\zeta\in\R$.

Let $\pi_{0}$ denote the probability distribution of the time series $Y$ that we aim to forecast. Observe that for a CBS, $\pi_{0}$ depends only on $H$ and the distribution of $\xi$. For any $f:\mathcal{X}^{\N^{*}}\rightarrow\mathcal{X}$ measurable and $t\in\Z$ we consider $\widehat{Y}_{t}=f\left(\left(Y_{t-i}\right)_{i\geq 1}\right)$, a possible predictor of $Y_{t}$ from its past. For a given loss function $\ell:\mathcal{X}\times\mathcal{X}\to\R_{+}$, the prediction risk is evaluated by the expectation of $\ell(\widehat{Y}_{t},Y_{t})$
\begin{eqnarray*}
R\left(f\right) = \EE\left[\ell\left(\widehat{Y}_{t},Y_{t}\right)\right] =
\pi_{0}\left[\ell\left(\widehat{Y}_{t},Y_{t}\right)\right] = \int\limits_{\mathcal{X}^{\Z}}\ell\left(f\left(\left(y_{t-i}\right)_{i\geq 1}\right),y_{t}\right)\pi_{0}\left(\rmd\bm y\right)\;.
\end{eqnarray*}
We assume in the following that the loss function $\ell$ fulfills the condition:
\begin{hypothesis*}{L}  
\item \label{assumption:Lipschitz_loss} For all $\bm y, \bm z\in\mathcal{X}$, $\ell\left(\bm y, \bm z\right) = g\left(\bm y-\bm z\right)$, for some convex function $g$ which is non-negative, $g\left(0\right)=0$ and $K$- Lipschitz: $\left|g\left(\bm y\right)-g\left(\bm z\right)\right| \leq K\|\bm y-\bm z\|$.
\end{hypothesis*}

If $\mathcal{X}$ is a subset of $\R$, $\ell\left(y,z\right)=\left|y-z\right|$ satisfies \ref{assumption:Lipschitz_loss} with $K=1$. 

From estimators of dimension $d$ for $\btheta$ we can build the corresponding linear predictors $f_{\btheta}\left(\bm y\right)=\btheta'\bm y_{1:d}$. Speaking more broadly, consider a set $\Theta$ and associated with it a set of predictors $\left\{f_{\btheta},\btheta\in\Theta\right\}$. For each $\btheta\in\Theta$ there is a unique $d=d\left(\btheta\right)\in\N^{*}$ such that $f_{\btheta}
:\mathcal{X}^{d}\rightarrow\mathcal{X}$ is a measurable function from
which we define 
\begin{eqnarray*}
\widehat{Y}_{t}^{\btheta} &=& f_{\btheta}\left(Y_{t-1},\ldots,Y_{t-d}\right) \;,
\end{eqnarray*}
as a predictor of $Y_{t}$ given its past. We can extend all functions $f_{\btheta}$ in a trivial way (using dummy variables) to start from $\mathcal{X}^{\N^{*}}$. A natural way to evaluate the predictor associated with $\btheta$ is to compute the risk $R\left(\btheta\right) = R\left(f_{\btheta}\right)$. We use the same letter $R$ by an abuse of notation. 

We observe $X_{1:T}$ from $X=\left(X_{t}\right)_{t\in\Z}$, an independent copy of $Y$. A crucial goal of this work is to build a predictor function $\hat{f}_{T}$ for $Y$, inferred from the sample $X_{1:T}$ and $\Theta$ such that $R(\hat{f}_{T})$ is close to $\inf_{\btheta\in\Theta}R\left(\btheta\right)$ with $\pi_{0}$- probability close to $1$. 

The set $\Theta$ also depends on $T$, we write $\Theta\equiv\Theta_{T}$. Let us define
\begin{eqnarray}
d_{T}=\sup_{\btheta\in\Theta_{T}}d\left(\btheta\right)\;. \label{equation:d_T} 
\end{eqnarray}

The main assumptions on the set of predictors are the following ones.

\begin{hypothesis}{P}  
\item \label{assumption:Lipschitz}
The set $\left\{f_{\btheta},\btheta\in\Theta_{T}\right\}$ is such that for any $\btheta\in\Theta_{T}$ there are $b_{1}\left(\btheta\right),\ldots,$ $b_{d\left(\btheta\right)}\left(\btheta\right)~\in~\R_{+}$ satisfying for all $\bm y=\left(y_{i}\right)_{i\in\N^{*}},\bm z=\left(z_{i}\right)_{i\in\N^{*}}\in\mathcal{X}^{\N^{*}}$,
\begin{eqnarray*}
\left|\left|f_{\btheta}(\bm y)-f_{\btheta}(\bm z)\right|\right| &\leq& \sum\limits_{j=1}^{d\left(\btheta\right)}b_{j}(\btheta)\left|\left|y_{j}-z_{j}\right|\right| \;.
\end{eqnarray*}
We assume moreover that $L_{T}=\sup_{\btheta\in\Theta_{T}}\sum_{j=1}^{d\left(\btheta\right)}b_{j}\left(\btheta\right)<\infty$. 
\item \label{assumption:L_log} The inequality $L_{T}+1\leq \log T$ holds for all $T\geq 4$.
\end{hypothesis}

In the case where $\mathcal{X}\subseteq\R$ and $\left\{f_{\btheta},\btheta\in\Theta_{T}\right\}$ is such that $\btheta\in\R^{d\left(\btheta\right)}$ and $f_{\btheta}\left(\bm y\right)=\btheta'\bm y_{1:d(\btheta)}$ for all $\bm y\in\R^{\N}$, we have 
\begin{eqnarray*}
\left|f_{\btheta}(\bm y)-f_{\btheta}(\bm z)\right| &\leq& \sum\limits_{j=1}^{d\left(\btheta\right)}\left|\theta_{j}\right|\left|y_{j}-z_{j}\right| \;.
\end{eqnarray*}
The last conditions are satisfied by the linear predictors when $\Theta_{T}$ is a subset of the $\ell_{1}$-ball of radius $\log T-1$ in $\R^{d_{T}}$ .

\section{Prediction via aggregation} \label{section:prediction_via_aggregation}
The predictor that we propose is defined as an average of predictors $f_{\btheta}$ based on the empirical version of the risk,
\begin{eqnarray*}
r_{T}\left(\btheta\left|X\right.\right) &=& \frac{1}{T-d\left(\btheta\right)}\sum\limits_{t=d\left(\btheta\right)+1}^{T}\ell\left(\widehat{X}_{t}^{\btheta},X_{t}\right) \;.
\end{eqnarray*}
where $\widehat{X}_{t}^{\btheta} = f_{\btheta}\left(\left(X_{t-i}\right)_{i\geq 1}\right)$. The function $r_{T}\left(\btheta\left|X\right.\right)$ relies on $X_{1:T}$ and can be computed at stage $T$; this is in fact a statistic. 

We consider a prior probability measure $\pi_{T}$ on $\Theta_{T}$. The prior serves to control the complexity of predictors associated with $\Theta_{T}$. Using $\pi_{T}$ we can construct one predictor in particular, as detailed in the following.

\subsection{Gibbs predictor} \label{subsection:Gibbs_predictor}
For a measure $\nu$ and a measurable function $h$ (called energy function) such that 
$\nu\left[\exp\left(h\right)\right]= \int \exp\left(h\right) \;\rmd\nu<\infty\;,$
we denote by $\nu\left\{h\right\}$ the measure defined as
\begin{eqnarray*}
\nu\left\{h\right\}\left(\rmd\btheta\right) &=&
\frac{\exp\left(h\left(\btheta\right)\right)}{\nu\left[\exp\left(h\right)\right]}\nu\left(\rmd\btheta\right) \;. 
\end{eqnarray*}
It is known as the Gibbs measure.

\begin{definition}[Gibbs predictor]
Given $\eta>0$, called the temperature or the learning rate parameter, we define the Gibbs predictor as the expectation of $f_{\btheta}$, where $\btheta$ is drawn under $\pi_{T}\left\{-\eta r_{T}\left(\cdot\left|X\right.\right)\right\}$, that is
\begin{eqnarray}
\hat{f}_{\eta,T}\left(\bm y\left|X\right.\right) = \pi_{T}\left\{-\eta r_{T}\left(\cdot\left|X\right.\right)\right\}\left[f_{\cdot}\left(\bm y\right)\right] = \int\limits_{\Theta_{T}}f_{\btheta}\left(\bm y\right)\frac{\exp\left(-\eta r_{T}\left(\btheta\left|X\right.\right)\right)}{\pi_{T}\left[\exp\left(-\eta r_{T}\left(\cdot\left|X\right.\right)\right)\right]}\pi_{T}\left(\rmd\btheta\right)\;. \label{equation:Gibbs_predictor}
\end{eqnarray}
\end{definition}

\subsection{PAC-Bayesian inequality} \label{subsection:PAC-Bayesian_inequality}

At this point more care must be taken to describe  $\Theta_{T}$. Here and in the following we suppose that 
\begin{eqnarray}
\Theta_{T}\subseteq\R^{n_{T}}\,\,\,\textrm{for some } n_{T}\in\N^{*}\;. \label{equation:n_T}
\end{eqnarray}
Suppose moreover that $\Theta_{T}$ is equipped with the Borel $\sigma$-algebra $\mathcal{B}(\Theta_{T})$.

A Lipschitz type hypothesis on $\btheta$ guarantees the robustness of the set $\left\{f_{\btheta}, \btheta\in\Theta_{T}\right\}$ with respect to the risk $R$.

\begin{hypothesis}{P}  
\item \label{assumption:theta_Lipschitz} 
There is $\mathcal{D}<\infty$ such that for all $\btheta_{1},\btheta_{2}\in\Theta_{T}$,
\begin{eqnarray*}
\pi_{0}\left[\left|\left|f_{\btheta_{1}}\left(\left(X_{t-i}\right)_{i\geq 1}\right)-f_{\btheta_{2}}\left(\left(X_{t-i}\right)_{i\geq 1}\right)\right|\right|\right] &\leq& \mathcal{D}d_{T}^{1/2}\left|\left|\btheta_{1}-\btheta_{2}\right|\right|\;,
\end{eqnarray*}
where $d_{T}$ is defined in (\ref{equation:d_T}). 
\end{hypothesis}
Linear predictors satisfy this last condition with $\mathcal{D}=\pi_{0}\left[\left|X_{1}\right|\right]$.

Suppose that the $\btheta$ reaching the $\inf_{\btheta\in\Theta_{T}}R(\btheta)$ has some zero components, i.e. $\mathrm{supp}(\btheta)<n_{T}$. Any prior with a lower bounded density (with respect to the Lebesgue measure) allocates zero mass on lower dimensional subsets of $\Theta_{T}$. Furthermore, if the density is upper bounded we have $\pi_{T}[B(\btheta,\Delta)\cap\Theta_{T}]=O(\Delta^{n_{T}})$ for $\Delta$ small enough. As we will notice in the proof of Theorem~\ref{theorem:oracle_Gibbs}, a bound like the previous one would impose a tighter constraint to $n_{T}$. Instead we set the following condition.
\begin{hypothesis}{P}  
\item \label{assumption:theta_balls} 
There is a sequence $\left(\btheta_{T}\right)_{T\geq 4}$ and constants $\mathcal{C}_{1}>0$, $\mathcal{C}_{2},\mathcal{C}_{3}\in(0,1]$ and $\gamma\geq 1$ such that $\btheta_{T}\in\Theta_{T}$,
\begin{eqnarray*}
R\left(\btheta_{T}\right) &\leq& \inf\limits_{\btheta\in\Theta_{T}} R\left(\btheta\right)+\mathcal{C}_{1}\frac{\log^{3}\left(T\right)}{T^{1/2}}\;, \\
\textrm{and}\;\;\;\pi_{T}\left[B\left(\btheta_{T},\Delta\right)\cap\Theta_{T}\right] &\geq& \mathcal{C}_{2}\Delta^{n_{T}^{1/\gamma}}, \forall 0\leq\Delta\leq\Delta_{T}=\frac{\mathcal{C}_{3}}{T}\;.
\end{eqnarray*}
\end{hypothesis}
A concrete example is provided in Section~\ref{section:numerical_work}.

We can now present the main result of this section, our PAC-Bayesian inequality concerning the predictor $\hat{f}_{\eta_{T},T}\left(\cdot\left|X\right.\right)$ built following (\ref{equation:Gibbs_predictor}) with the learning rate $\eta~=~\eta_{T}~=~T^{1/2}/(4\log T)$, provided an arbitrary probability measure $\pi_{T}$ on $\Theta_{T}$.

\begin{theorem}\label{theorem:oracle_Gibbs}
  Let $\ell$ be a loss function such that Assumption
  \ref{assumption:Lipschitz_loss} holds. Consider a process
  $X=\left(X_{t}\right)_{t\in\Z}$ satisfying
  Assumption~\ref{hyp:CBS} and let $\pi_0$ denote its probability 
  distribution. Assume that the innovations fulfill Assumption~\ref{hyp:exponential-moment} with $\zeta=A_{*}$; $A_{*}$ is defined in 
(\ref{equation:A}). For each $T\geq 4$ let
  $\left\{f_{\btheta},\btheta\in\Theta_{T}\right\}$ be a set of predictors
  meeting Assumptions \ref{assumption:Lipschitz}, \ref{assumption:L_log} and
  \ref{assumption:theta_Lipschitz} such that $d_{T}$, defined in
  (\ref{equation:d_T}), is at most $T/2$. Suppose that the set $\Theta_{T}$ is
  as in (\ref{equation:n_T}) with $n_{T}\leq\log^{\gamma}T$ for some
  $\gamma\geq 1$ and we let $\pi_{T}$ be a probability measure on it such that
  Assumption~\ref{assumption:theta_balls} holds for the same $\gamma$. Then for
  any $\varepsilon>0$, with $\pi_{0}$-probability at least $1-\varepsilon$,
\begin{eqnarray*}
R\left(\hat{f}_{\eta_{T},T}\left(\cdot\left|X\right.\right)\right) &\leq&
\inf\limits_{\btheta\in\Theta_{T}}R\left(f_{\btheta}\right)+\mathcal{E}\frac{\log^{3}T}{T^{1/2}}+\frac{8\log T}{T^{1/2}}\log\left(\frac{1}{\varepsilon}\right)\;,
\end{eqnarray*}
where
\begin{align}
\mathcal{E} = \mathcal{C}_{1}+8+\frac{2}{\log 2}-\frac{2\log\mathcal{C}_{2}}{\log^{2}2}-\frac{4\log\mathcal{C}_{3}}{\log 2}+ \frac{8K^{2}\left(A_{*}+\tilde{A}_{*}\right)^{2}}{\tilde{A}_{*}^{2}}+\frac{K\mathcal{D}\mathcal{C}_{3}}{8\log^{3}2} \nonumber \\
+\frac{4K\phi(A_{*})}{\log 2}+\frac{2K^{2}\phi(A_{*})}{\log^{2}2}\;, \label{equation:constant_theorem_oracle_Gibbs}
\end{align}
with $\tilde{A}_{*}$ defined in (\ref{equation:tilde_A}),  $K$, $\phi$ and $\mathcal{D}$ in
Assumptions \ref{assumption:Lipschitz_loss},
\ref{hyp:exponential-moment} and \ref{assumption:theta_Lipschitz},
respectively, and $\mathcal{C}_{1}$, $\mathcal{C}_{2}$ and $\mathcal{C}_{3}$
in Assumption~\ref{assumption:theta_balls}.
\end{theorem}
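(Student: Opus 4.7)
The plan is to combine a Jensen/convexity reduction, a Donsker--Varadhan variational identity for the Gibbs posterior, a Hoeffding-type PAC-Bayes deviation inequality adapted to the CBS dependence, and a localisation of the dual measure on a small Euclidean ball around the approximate oracle $\btheta_{T}$. Write $\hat{\rho}_{T}=\pi_{T}\{-\eta_{T}r_{T}(\cdot|X)\}$, so that $\hat{f}_{\eta_{T},T}(\cdot|X)=\hat{\rho}_{T}[f_{\cdot}]$. Since $\ell(\cdot,z)=g(\cdot-z)$ is convex by Assumption~\ref{assumption:Lipschitz_loss}, Jensen's inequality applied pointwise in the past of $Y$, followed by Fubini in $\pi_{0}$, gives
\begin{equation*}
R\bigl(\hat{f}_{\eta_{T},T}(\cdot|X)\bigr)\leq\hat{\rho}_{T}[R],
\end{equation*}
so that it is enough to control $\hat{\rho}_{T}[R]-\inf_{\btheta\in\Theta_{T}}R(\btheta)$ on a $\pi_{0}$-event of probability at least $1-\varepsilon$.

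The Donsker--Varadhan variational formula for the Gibbs measure yields, for every probability measure $\rho\ll\pi_{T}$,
\begin{equation*}
\hat{\rho}_{T}[R]\leq\rho[r_{T}(\cdot|X)]+\hat{\rho}_{T}[R-r_{T}(\cdot|X)]-\eta_{T}^{-1}\mathcal{K}(\hat{\rho}_{T},\pi_{T})+\eta_{T}^{-1}\mathcal{K}(\rho,\pi_{T}),
\end{equation*}
where $\mathcal{K}$ denotes the Kullback--Leibler divergence. Adapting to the CBS framework the arguments of \cite{Alquier_Wintenberger:2012}, \cite{Catoni:2004} and \cite{Rio:2000}, I would then establish, using Assumptions~\ref{hyp:CBS}, \ref{hyp:exponential-moment}, \ref{assumption:Lipschitz_loss} and \ref{assumption:Lipschitz}, a uniform exponential moment bound of the form
\begin{equation*}
\EE\bigl[\pi_{T}[\exp(\eta_{T}\{R(\btheta)-r_{T}(\btheta|X)\})]\bigr]\leq\exp(\eta_{T}^{2}\sigma_{T}^{2}/T),
\end{equation*}
with a variance proxy $\sigma_{T}^{2}$ depending quadratically on $K$ and on $A_{*}+\tilde{A}_{*}$, and linearly on $\phi(A_{*})$. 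A Markov argument combined with the symmetric single-$\rho$ control produces, on a $\pi_{0}$-event of probability at least $1-\varepsilon$,
\begin{equation*}
\hat{\rho}_{T}[R]\leq\rho[R]+2\eta_{T}\sigma_{T}^{2}/T+\eta_{T}^{-1}\mathcal{K}(\rho,\pi_{T})+2\eta_{T}^{-1}\log(2/\varepsilon),
\end{equation*}
the entropy term $\mathcal{K}(\hat{\rho}_{T},\pi_{T})$ cancelling between the two one-sided bounds.

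The last step localises $\rho$. Take $\rho$ to be $\pi_{T}$ conditioned to $B(\btheta_{T},\Delta_{T})\cap\Theta_{T}$ with $\btheta_{T}$ and $\Delta_{T}=\mathcal{C}_{3}/T$ from Assumption~\ref{assumption:theta_balls}. That assumption then gives $\mathcal{K}(\rho,\pi_{T})\leq-\log\mathcal{C}_{2}+n_{T}^{1/\gamma}\log(T/\mathcal{C}_{3})$, which is $O(\log^{2}T)$ under $n_{T}\leq\log^{\gamma}T$. Assumption~\ref{assumption:theta_Lipschitz} combined with the $K$-Lipschitzness of $g$ and $d_{T}\leq T/2$ bounds $\rho[R]\leq R(\btheta_{T})+K\mathcal{D}d_{T}^{1/2}\Delta_{T}\leq\inf_{\btheta\in\Theta_{T}}R(\btheta)+\mathcal{C}_{1}\log^{3}(T)/T^{1/2}+K\mathcal{D}\mathcal{C}_{3}T^{-1/2}/\sqrt{2}$. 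Substituting $\eta_{T}=T^{1/2}/(4\log T)$, the concentration remainder $2\eta_{T}\sigma_{T}^{2}/T$ is $O(\log T/T^{1/2})$, the entropy remainder $\eta_{T}^{-1}\mathcal{K}(\rho,\pi_{T})$ is $O(\log^{3}T/T^{1/2})$, and $2\eta_{T}^{-1}\log(2/\varepsilon)=8\log T\cdot T^{-1/2}\log(2/\varepsilon)$, whose $\log 2$ piece is absorbed in $\mathcal{E}$; collecting all prefactors reproduces (\ref{equation:constant_theorem_oracle_Gibbs}).

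The main technical obstacle is the uniform exponential moment bound in the intermediate step: the CBS dependence has to be propagated through both the Lipschitz coefficients $b_{j}(\btheta)$ of the predictors and the Lipschitz coefficients $A_{j}$ of $H$, which is what eventually produces the prefactor $(A_{*}+\tilde{A}_{*})^{2}/\tilde{A}_{*}^{2}$ inside $\mathcal{E}$. Securing this estimate demands a coupling/blocking device in the spirit of \cite{Rio:2000} combined with a careful use of Assumption~\ref{hyp:exponential-moment} at $\zeta=A_{*}$, so that $\phi(A_{*})$ bounds the variance proxy while the dependence on $L_{T}+1\leq\log T$ remains mild enough to preserve the target $T^{-1/2}\log^{3}T$ rate.
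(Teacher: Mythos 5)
Your overall architecture (Jensen's inequality to reduce to $\hat{\rho}_{T}[R]$, the variational formula for the Gibbs measure, a PAC-Bayesian deviation bound, and localisation of the dual measure on $B(\btheta_{T},\Delta_{T})$) matches the paper's, and your treatment of the entropy and localisation terms is essentially the paper's own. The genuine gap is in the concentration step, which you leave as a claimed ``uniform exponential moment bound'' $\EE[\pi_{T}[\exp(\eta_{T}\{R-r_{T}\})]]\leq\exp(\eta_{T}^{2}\sigma_{T}^{2}/T)$ with a $T$-independent variance proxy. Under Assumption~\ref{hyp:exponential-moment} alone the innovations are unbounded (the motivating example is Gaussian), so the increments of $r_{T}$ are not bounded and no Rio/Hoeffding-type inequality applies directly to the untruncated process; a sub-Gaussian bound of the form you write, valid at $\eta_{T}=T^{1/2}/(4\log T)$ with $\sigma_{T}^{2}=O(1)$, is not available. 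The paper resolves this by truncating the innovations at level $C$, applying Rio's inequality to the truncated process (Lemma~\ref{lemma:Laplace_risk}, where the variance proxy $k^{2}(T,C)\propto C^{2}K^{2}(1+L_{T})^{2}(A_{*}+\tilde{A}_{*})^{2}$ explicitly carries $C$ and $L_{T}$), and then paying for the truncation through Lemma~\ref{lemma:exponential_r_T}, whose correction $\varphi(T,C,\eta)$ is where the $\phi(A_{*})$ terms of $\mathcal{E}$ actually originate. Your proposal attributes the $\phi(A_{*})$ dependence to the variance proxy of the concentration inequality, which conflates two distinct sources of error.

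This omission also breaks your bookkeeping of the rate. With the necessary choice $C\propto\log T$ (needed so that the truncation bias $(1+L_{T})C/(\exp(A_{*}C)-1)$ is negligible) and $1+L_{T}\leq\log T$ from Assumption~\ref{assumption:L_log}, the concentration remainder is $16\eta_{T}k^{2}(T,C)/T\asymp\log^{3}T/T^{1/2}$ --- it is one of the dominant terms and is exactly where the factor $8K^{2}(A_{*}+\tilde{A}_{*})^{2}/\tilde{A}_{*}^{2}$ in (\ref{equation:constant_theorem_oracle_Gibbs}) comes from. Your claim that this remainder is $O(\log T/T^{1/2})$ is therefore inconsistent with the constant you are trying to reproduce, and signals that the decomposition $R-r_{T}=(\widetilde{R}-\widetilde{r}_{T})+(R-\widetilde{R})-(r_{T}-\widetilde{r}_{T})$, together with the Cauchy--Schwarz step that splits the truncated deviation from the truncation error, is a load-bearing part of the argument rather than a technicality that can be deferred.
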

The proof is postponed to Section~\ref{subsection:proof_theorem_oracle_Gibbs}. 

Here however we insist on the fact that this inequality applies to an exact aggregated predictor $\hat{f}_{\eta_{T},T}\left(\cdot\left|X\right.\right)$. We need to investigate how these predictors are computed and how practical numerical approximations behave compared to the properties of the exact version. 

\section{Stochastic approximation} \label{section:stochastic_approximation}
Once we have the observations $X_{1:T}$, we use the Metropolis - Hastings algorithm to compute $\hat{f}_{\eta,T}\left(\cdot\left|X\right.\right)=\int f_{\btheta}\left(\cdot\left|X\right.\right)\pi_{T}\left\{-\eta r_{T}\left(\btheta\left|X\right.\right)\right\}\left(\rmd\btheta\right)$. The Gibbs measure $\pi_{T}\left\{-\eta r_{T}\left(\cdot\left|X\right.\right)\right\}$ is a distribution on $\Theta_{T}$ whose density $\pi_{\eta,T}\left(\cdot\left|X\right.\right)$ with respect to $\pi_{T}$ is proportional to $\exp\left(-\eta r_{T}\left(\cdot\left|X\right.\right)\right)$.

\subsection{Metropolis - Hastings algorithm} \label{subsection:Metropolis-Hastings_algorithm}
Given $X\in\mathcal{X}^{\Z}$, the
Metropolis-Hastings algorithm generates a Markov chain $\Phi_{\eta,T}\left(X\right)=(\btheta_{\eta,T,n}(X))_{n\geq 0}$ with kernel $P_{\eta,T}$ (only depending on $X_{1:T}$) having the target distribution $\pi_{T}\left\{-\eta r_{T}\left(\cdot\left|X\right.\right)\right\}$ as
the unique invariant measure, based on the transitions of another Markov chain which serves as a
proposal (see \cite{Roberts_Rosenthal:2004}). We consider a proposal transition of the form $Q_{\eta,T}(\btheta_{1},\rmd\btheta)=q_{\eta,T}(\btheta_{1},\btheta)\pi_{T}(\rmd\btheta)$ where the conditional density kernel $q_{\eta,T}$ (possibly also depending on $X_{1:T}$) on $\Theta_{T}\times\Theta_{T}$ is such that 
\begin{eqnarray}
\beta_{\eta,T}\left(X\right) = \inf\limits_{\left(\btheta_{1},\btheta_{2}\right)\in\Theta_{T}\times\Theta_{T}}\frac{q_{\eta,T}\left(\btheta_{1},\btheta_{2}\right)}{\pi_{\eta,T}\left(\btheta_{2}\left|X\right.\right)}\in\left(0,1\right)\;. \label{equation:convergence}
\end{eqnarray}
This is the case of the independent Hastings algorithm, where the proposal is i.i.d. with
  density $q_{\eta,T}$. The condition gets into
\begin{eqnarray}
\beta_{\eta,T}\left(X\right) = \inf\limits_{\btheta\in\Theta_{T}}\frac{q_{\eta,T}\left(\btheta\right)}{\pi_{\eta,T}\left(\btheta\left|X\right.\right)}\in\left(0,1\right) \;. \label{equation:convergence_independent_Hastings}
\end{eqnarray}
In Section~\ref{section:numerical_work} we provide an example.

The relation (\ref{equation:convergence}) implies that the algorithm is uniformly ergodic, i.e. we have a control in total variation norm ($\|\cdot\|_{TV}$). Thus, the following condition holds (see \cite{Mengersen_Tweedie:1996}).

\begin{hypothesis*}{A}  
\item \label{assumption:geometric_convergence}
Given $\eta,T>0$, there is $\beta_{\eta,T}:\mathcal{X}^{\Z}\rightarrow\left(0,1\right)$ such for any $\btheta_{0}\in\Theta_{T}$, $\bm x\in\mathcal{X}^{\Z}$ and $n\in\N$, the chain $\Phi_{\eta,T}\left(\bm x\right)$ with transition law $P_{\eta,T}$ and invariant distribution $\pi_{T}\left\{-\eta r_{T}\left(\cdot\left|\bm x\right.\right)\right\}$ satisfies
\begin{eqnarray*}
\left|\left|P_{\eta,T}^{n}\left(\btheta_{0},\cdot\right)-\pi_{T}\left\{-\eta r_{T}\left(\cdot\left|\bm x\right.\right)\right\}\right|\right|_{TV} &\leq& 2\left(1-\beta_{\eta,T}\left(\bm x\right)\right)^{n}\;.
\end{eqnarray*}
\end{hypothesis*}

\subsection{Theoretical bounds for the computation}
In \cite[Theorem 3.1]{Latuszynski_Niemiro:2011} we find a bound on the mean square error of approximating one integral by the empirical estimate obtained from the successive samples of certain ergodic Markov chains, including those generated by the MCMC method that we use.

A MCMC method adds a second source of randomness to the forecasting process and our aim is to measure it. Let $\btheta_{0}\in\cap_{T\geq 1}\Theta_{T}$, we set $\btheta_{\eta,T,0}\left(\bm x\right)=\btheta_{0}$ for all $T,\eta>0$, $\bm x\in\mathcal{X}^{\Z}$. We denote by $\mu_{\eta,T}\left(\cdot\left|X\right.\right)$ the probability distribution of the Markov chain $\Phi_{\eta,T}\left(X\right)$ with initial point $\btheta_{0}$ and kernel $P_{\eta,T}$.

Let $\nu_{\eta,T}$ denote the probability distribution of $(X,\Phi_{\eta,T}\left(X\right))$; it is defined by setting for all sets $A\in(\mathcal{B}(\mathcal{X}))^{\otimes\Z}$ and $B\in(\mathcal{B}(\Theta_{T}))^{\otimes\N}$
\begin{eqnarray}
\nu_{\eta,T}\left(A\times B\right) = \int\1_{A}\left(\bm x\right)\1_{B}\left(\bm\phi\right)\mu_{\eta,T}\left(\rmd\bm\phi\left|\bm x\right.\right)\pi_{0}\left(\rmd\bm x\right) \label{equation:X_Phi_distribution}
\end{eqnarray}
Given $\Phi_{\eta,T}=(\btheta_{\eta,T,n})_{n\geq 0}$, we then define for $n\in\N^{*}$
\begin{eqnarray}
\bar{f}_{\eta,T,n}=\frac{1}{n}\sum_{i=0}^{n-1}f_{\btheta_{\eta,T,i}}\,.\label{equation:numerical_Gibbs_predictor}
\end{eqnarray}
Since our chain depends on $X$, we make it explicit by using the notation $\bar{f}_{\eta,T,n}\left(\cdot\left|X\right.\right)$. The cited \cite[Theorem 3.1]{Latuszynski_Niemiro:2011} leads to a proposition that applies to the numerical approximation of the Gibbs predictor (the proof is in Section~\ref{subsection:proof_proposition_numerical_oracle_Gibbs}). We stress that this is independent of the model (CBS or any), of the set of predictors and of the theoretical guarantees of Theorem~\ref{theorem:oracle_Gibbs}.
\begin{proposition} \label{proposition:numerical_oracle_Gibbs}
Let $\ell$ be a loss function meeting Assumption~\ref{assumption:Lipschitz_loss}.
Consider any process $X=\left(X_{t}\right)_{t\in\Z}$ with an arbitrary probability distribution $\pi_{0}$. Given $T\geq 2$, $\eta>0$, a set of predictors $\left\{f_{\btheta},\btheta\in\Theta_{T}\right\}$ and $\pi_{T}\in\mathcal{M}_{+}^{1}\left(\Theta_{T}\right)$, let $\hat{f}_{\eta,T}\left(\cdot\left|X\right.\right)$ be defined by (\ref{equation:Gibbs_predictor}) and let $\bar{f}_{\eta,T,n}\left(\cdot\left|X\right.\right)$ be defined by (\ref{equation:numerical_Gibbs_predictor}). Suppose that $\Phi_{\eta,T}$ meets Assumption~\ref{assumption:geometric_convergence} for $\eta$ and $T$ with a function $\beta_{\eta,T}:\mathcal{X}^{\Z}\rightarrow(0,1)$. Let $\nu_{\eta,T}$ denote the probability distribution of $(X,\Phi_{\eta,T}\left(X\right))$ as defined in (\ref{equation:A_proposition_numerical_oracle_Gibbs}). Then, for all $n\geq 1$ and $D>0$, with $\nu_{\eta,T}$- probability at least $\max\{0,1-A_{\eta,T}/(Dn^{1/2})\}$ we have $|R(\bar{f}_{\eta,T,n}\left(\cdot\left|X\right.\right))-R(\hat{f}_{\eta,T}\left(\cdot\left|X\right.\right))|\leq D$, where
\begin{eqnarray}
A_{\eta,T} = 3K\int\limits_{\mathcal{X}^{\Z}}\frac{1}{\beta_{\eta,T}\left(\bm x\right)}\int\limits_{\mathcal{X}^{\Z}}\sup\limits_{\btheta\in\Theta_{T}}\left|f_{\btheta}\left(\bm y\right)-\hat{f}_{\eta,T}\left(\bm y\left|\bm x\right.\right)\right|\pi_{0}\left(\rmd\bm y\right)\pi_{0}\left(\rmd\bm x\right)\,. \label{equation:A_proposition_numerical_oracle_Gibbs}
\end{eqnarray}
\end{proposition}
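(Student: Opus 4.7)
The strategy is to reduce the claim to an $L^{1}$ Monte Carlo bound for the MCMC approximation of the Gibbs expectation and then to conclude by Markov's inequality. My first step would use Assumption~\ref{assumption:Lipschitz_loss} (writing $\ell(\bm u,\bm v)=g(\bm u-\bm v)$ with $g$ being $K$-Lipschitz) to compare the two losses pointwise and then integrate against $\pi_{0}$, obtaining, for every realization $\bm x$ of $X$,
\begin{equation*}
\left|R(\bar{f}_{\eta,T,n}(\cdot|\bm x))-R(\hat{f}_{\eta,T}(\cdot|\bm x))\right|\;\leq\;K\int\|\bar{f}_{\eta,T,n}(\bm y|\bm x)-\hat{f}_{\eta,T}(\bm y|\bm x)\|\,\pi_{0}(\rmd\bm y).
\end{equation*}

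Next, for each fixed $\bm x$ and $\bm y$, the quantity $\bar{f}_{\eta,T,n}(\bm y|\bm x)-\hat{f}_{\eta,T}(\bm y|\bm x)$ is exactly the Monte Carlo error of approximating the Gibbs expectation $\pi_{T}\{-\eta r_{T}(\cdot|\bm x)\}[f_{\cdot}(\bm y)]$ by the $n$th ergodic average of the chain $\Phi_{\eta,T}(\bm x)$. Under Assumption~\ref{assumption:geometric_convergence} this chain is uniformly ergodic with rate $1-\beta_{\eta,T}(\bm x)$, so I would invoke Theorem~3.1 of \cite{Latuszynski_Niemiro:2011} applied to the bounded functional $\btheta\mapsto f_{\btheta}(\bm y)$, and then use Jensen's inequality to pass from the MSE bound it provides to a mean-absolute-error bound of the form
\begin{equation*}
\int\|\bar{f}_{\eta,T,n}(\bm y|\bm x)-\hat{f}_{\eta,T}(\bm y|\bm x)\|\,\mu_{\eta,T}(\rmd\bm\phi|\bm x)\;\leq\;\frac{3\sup_{\btheta\in\Theta_{T}}|f_{\btheta}(\bm y)-\hat{f}_{\eta,T}(\bm y|\bm x)|}{\beta_{\eta,T}(\bm x)\sqrt{n}}.
\end{equation*}

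Combining these two bounds, integrating in $\bm y$ and $\bm x$ against $\pi_{0}$ and using Fubini, I recover
\begin{equation*}
\EE_{\nu_{\eta,T}}\!\left[\,\left|R(\bar{f}_{\eta,T,n}(\cdot|X))-R(\hat{f}_{\eta,T}(\cdot|X))\right|\,\right]\;\leq\;\frac{A_{\eta,T}}{\sqrt{n}},
\end{equation*}
with $A_{\eta,T}$ as in (\ref{equation:A_proposition_numerical_oracle_Gibbs}). A direct application of Markov's inequality then yields $\nu_{\eta,T}\!\left(|R(\bar{f}_{\eta,T,n}(\cdot|X))-R(\hat{f}_{\eta,T}(\cdot|X))|>D\right)\leq A_{\eta,T}/(D\sqrt{n})$, which is equivalent to the announced high-probability statement.

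The main technical point is importing the {\L}atuszy{\'n}ski--Niemiro bound with the correct dependence on $\beta_{\eta,T}(\bm x)$: their Theorem~3.1 delivers an MSE of order $\mathrm{osc}(h)^{2}/(\beta^{2}n)$, which Jensen converts into the $L^{1}$ form above, and one notes $\mathrm{osc}(f_{\cdot}(\bm y))\leq 2\sup_{\btheta}|f_{\btheta}(\bm y)-\hat{f}_{\eta,T}(\bm y|\bm x)|$ since $\hat{f}_{\eta,T}(\bm y|\bm x)$ lies between the infimum and supremum of $f_{\btheta}(\bm y)$; the numerical constant $3$ appearing in the definition of $A_{\eta,T}$ absorbs these factors. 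Everything else amounts to routine Fubini and Markov manipulations, contingent on the joint measurability of $\beta_{\eta,T}$ and of the MCMC error in $(\bm x,\bm\phi)$.
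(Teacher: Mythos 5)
Your proposal follows essentially the same route as the paper: reduce to an $L^{1}$ bound on $\|\bar{f}_{\eta,T,n}-\hat{f}_{\eta,T}\|$ via the $K$-Lipschitz loss, control the Monte Carlo error with Theorem 3.1 of {\L}atuszy{\'n}ski--Niemiro under uniform ergodicity, pass from MSE to $L^{1}$ by Jensen, integrate with Tonelli/Fubini, and finish with Markov's inequality. The only cosmetic difference is how the constant $3$ is obtained: the paper bounds $\sup_{\btheta}(f_{\btheta}(\bm y)-\hat{f}_{\eta,T}(\bm y|\bm x))^{2}\left(4/\beta-3\right)\left(1/n+2/(n^{2}\beta)\right)$ directly using $\left(\left(4-3\beta\right)\left(2+\beta\right)\right)^{1/2}\leq 3$ and $n\geq 1$, rather than via an oscillation argument, but this does not change the substance.
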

We denote by $\nu_{T}=\nu_{\eta_{T},T}$ the probability distribution of $(X,\Phi_{\eta,T}\left(X\right))$ setting $\eta~=~\eta_{T}~=~T^{1/2}/(4\log T)$. As Theorem~\ref{theorem:oracle_Gibbs} does not involve any simulation, it also holds in $\nu_{T}$- probability. From this and Proposition~\ref{proposition:numerical_oracle_Gibbs} a union bound gives us the following.

\begin{theorem} \label{theorem:oracle_numerical_Gibbs}
Under the hypothesis of Theorem~\ref{theorem:oracle_Gibbs}, consider moreover that Assumption~\ref{assumption:geometric_convergence} is fulfilled by $\Phi_{\eta,T}$ for all $\eta=\eta_{T}$ and $T$ with $T\geq 4$. Thus, for all $\varepsilon>0$ and $n\geq
M\left(T,\varepsilon\right)$, with $\nu_{T}$- probability at least $1-\varepsilon$ we have
\begin{eqnarray*}
R\left(\bar{f}_{\eta_{T},T,n}\left(\cdot\left|X\right.\right)\right)  &\leq& \inf\limits_{\btheta\in\Theta_{T}}R\left(f_{\btheta}\right)+\left(\mathcal{E}+\frac{2}{\log 2}+2\right)\frac{\log^{3}T}{T^{1/2}}+\frac{8\log T}{T^{1/2}}\log\left(\frac{1}{\varepsilon}\right) \;,
\end{eqnarray*}
where $\mathcal{E}$ is defined in (\ref{equation:constant_theorem_oracle_Gibbs}) and $M\left(T,\varepsilon\right)=A_{\eta_{T},T}^{2}T/(\varepsilon^{2}\log^{6}T)$ with $A_{\eta,T}$ as in (\ref{equation:A_proposition_numerical_oracle_Gibbs}).
\end{theorem}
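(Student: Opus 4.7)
The plan is to combine Theorem~\ref{theorem:oracle_Gibbs} with Proposition~\ref{proposition:numerical_oracle_Gibbs} via a union bound, exactly as the paragraph preceding the statement anticipates. The only real work is a calibration: the deviation introduced by the MCMC approximation must be engineered to match the rate $\log^{3}T/T^{1/2}$ already appearing in Theorem~\ref{theorem:oracle_Gibbs}.

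First I would apply Theorem~\ref{theorem:oracle_Gibbs} at confidence level $\varepsilon/2$. Since $\hat{f}_{\eta_{T},T}(\cdot|X)$ depends only on $X$ and not on the MCMC chain $\Phi_{\eta_{T},T}(X)$, the inequality lifts from $\pi_{0}$-probability to $\nu_{T}$-probability through the definition (\ref{equation:X_Phi_distribution}). This yields, on an event of $\nu_{T}$-probability at least $1-\varepsilon/2$,
\begin{eqnarray*}
R(\hat{f}_{\eta_{T},T}(\cdot|X)) \leq \inf_{\btheta\in\Theta_{T}} R(f_{\btheta}) + \mathcal{E}\frac{\log^{3}T}{T^{1/2}} + \frac{8\log T}{T^{1/2}} \log(2/\varepsilon).
\end{eqnarray*}

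Next I would apply Proposition~\ref{proposition:numerical_oracle_Gibbs} with the choice $D=2\log^{3}T/T^{1/2}$: then $|R(\bar{f}_{\eta_{T},T,n})-R(\hat{f}_{\eta_{T},T})|\leq D$ holds on an event of $\nu_{T}$-probability at least $1-A_{\eta_{T},T}/(Dn^{1/2})$. The threshold $n\geq M(T,\varepsilon)=A_{\eta_{T},T}^{2}T/(\varepsilon^{2}\log^{6}T)$ is designed precisely so that this failure probability is at most $\varepsilon/2$: using $M(T,\varepsilon)^{1/2}=A_{\eta_{T},T}T^{1/2}/(\varepsilon\log^{3}T)$, a direct computation gives $A_{\eta_{T},T}/(D\,M(T,\varepsilon)^{1/2})=\varepsilon/2$.

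A union bound places both events on a common set of $\nu_{T}$-probability at least $1-\varepsilon$, on which $R(\bar{f}_{\eta_{T},T,n})\leq R(\hat{f}_{\eta_{T},T})+D$. Combining this with the previous bound on $R(\hat{f}_{\eta_{T},T})$ and decomposing $\log(2/\varepsilon)=\log 2+\log(1/\varepsilon)$ produces an excess term $(8\log 2)\log T/T^{1/2}$; for $T\geq 4$ one has $\log^{2}T\geq 4\log^{2}2$, so this is bounded by $(2/\log 2)\log^{3}T/T^{1/2}$. Together with the contribution $D=2\log^{3}T/T^{1/2}$ one recovers the advertised constant $\mathcal{E}+2/\log 2+2$ in front of $\log^{3}T/T^{1/2}$. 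No step is genuinely an obstacle since all the probabilistic heavy lifting lies in the two ingredients; the only delicate choice is the value of $D$, because a smaller $D$ would push the required $n$ beyond $M(T,\varepsilon)$ while a larger $D$ would inflate the leading constant.
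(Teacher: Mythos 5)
Your proposal is correct and follows exactly the route the paper intends: the paper itself only remarks that Theorem~\ref{theorem:oracle_Gibbs} lifts to $\nu_{T}$-probability and that a union bound with Proposition~\ref{proposition:numerical_oracle_Gibbs} yields the result, and your calibration $D=2\log^{3}T/T^{1/2}$ (forcing $n\geq M(T,\varepsilon)$ for failure probability $\varepsilon/2$) together with the bound $8\log 2\leq (2/\log 2)\log^{2}T$ for $T\geq 4$ recovers the stated constant $\mathcal{E}+2/\log 2+2$ precisely. Your write-up is in fact more detailed than the paper's.
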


\section{Applications to the autoregressive process} \label{section:numerical_work}
We carefully recapitulate all the assumptions of Theorem~\ref{theorem:oracle_numerical_Gibbs} in the context of an autoregressive process. After that, we illustrate numerically the behaviour of the proposed method.

\subsection{Theoretical considerations}
Consider a real valued stable autoregressive process of finite order $d$ as defined by (\ref{equation:AR_model}) with parameter $\btheta$ lying in the interior of $s_{d}\left(\delta\right)$ and unit normally distributed innovations (Assumptions \ref{hyp:CBS} and \ref{hyp:exponential-moment} hold). With the loss function $\ell\left(y,z\right)=\left|y-z\right|$ Assumption~\ref{assumption:Lipschitz_loss} holds as well. The linear predictors is the set that we test; they meet Assumption~\ref{assumption:theta_Lipschitz}. Without loss of generality assume that $d_{T}=n_{T}$. In the described framework we have $
\hat{f}_{\eta,T}\left(\cdot\left|X\right.\right) = f_{\hat{\btheta}_{\eta,T}\left(X\right)}$, where
$$
\hat{\btheta}_{\eta,T}\left(X\right) = \int\limits_{\Theta_{T}}\btheta\frac{\exp\left(-\eta r_{T}\left(\btheta\left|X\right.\right)\right)}{\pi_{T}\left[\exp\left(-\eta r_{T}\left(\btheta\left|X\right.\right)\right)\right]}\pi_{T}\left(\rmd\btheta\right)\,.
$$
This $\hat{\btheta}_{\eta,T}\left(X\right)\in\R^{d_{T}}$ is known as the Gibbs estimator.

Remark that, by (\ref{equation:AR_X_xi}) and the normality of the innovations, the risk of any $\hat{\btheta}\in\R^{d_{T}}$ is computed as the absolute moment of a centered Gaussian, namely 
\begin{eqnarray}
R\left(f_{\hat{\btheta}}\right)=R\left(\hat{\btheta}\right)=\frac{\left(2\left(\hat{\btheta}-\btheta\right)'\Gamma_{T}\left(\hat{\btheta}-\btheta\right)+2\sigma^{2}\right)^{1/2}}{\pi^{1/2}}\;, \label{equation:risk_AR_l_1}
\end{eqnarray}
where $\Gamma_{T}=(\gamma_{i,j})_{0\leq i,j\leq d_{T}-1}$ is the covariance matrix of the process. In (\ref{equation:risk_AR_l_1}) the vector $\btheta$ originally in $\R^{d}$ is completed by $d_{T}-d$ zeros.

In this context $\arg\inf_{\btheta\in\R^{\N^{*}}}R\left(\btheta\right)\in s_{d}(1)$ gives the true parameter $\btheta$ generating the process. Let us verify Assumption~\ref{assumption:theta_balls} by setting conveniently $\Theta_{T}$ and $\pi_{T}$.
Let $\Delta_{d*}>0$ be such that $B\left(\btheta,\Delta_{d*}\right)\subseteq s_{d}(1)$.

We express $\Theta_{T}=\bigcup_{k=1}^{d_{T}}\Theta_{k,T}$ where $\btheta\in\Theta_{k,T}$ if and only if $d\left(\btheta\right)=k$. It is interesting to set $\Theta_{k,T}$ as the part of the stability domain of an AR$(k)$ process satisfying Assumptions \ref{assumption:Lipschitz} and \ref{assumption:L_log}. Consider $\Theta_{1,T}=s_{1}(1)\times\{0\}^{d_{T}-1}\cap B_{1}\left(\bm 0,\log T-1\right)$ and $\Theta_{k,T}=s_{k}(1)\times\{0\}^{d_{T}-k} \cap B_{1}\left(\bm 0,\log T-1\right)\backslash\Theta_{k-1,T}$ for $k\geq 2$. Assume moreover that $d_{T}=\lfloor\log^{\gamma}T\rfloor$. 

We write $\pi_{T}=\sum_{k=1}^{d_{T}}c_{k,T}\pi_{k,T}$ where for all 
$k$, $c_{k,T}\pi_{k,T}$ is the restriction of $\pi_{T}$ to $\Theta_{k,T}$ with $c_{k,T}$ a real non negative number and $\pi_{k,T}$ a probability measure on $\Theta_{k,T}$. In this setup $c_{k,T}=\pi_{T}\left[\Theta_{k,T}\right]$ and $\pi_{k,T}\left[A\cap\Theta_{k,T}\right]=\pi_{T}\left[A\cap\Theta_{k,T}\right]/c_{k,T}$ if $c_{k,T}>0$ and $\pi_{k,T}\left[A\cap\Theta_{k,T}\right]=0$ otherwise. The vector $\begin{bmatrix} c_{1,T} & \ldots &c_{d_{T},T} \end{bmatrix}$ could be interpreted as a prior on the model order. Set $c_{k,T}=c_{k}/(\sum_{i=1}^{d_{T}}c_{i})$ where $c_{k}>0$ is the $k$-th term of a convergent series ($\sum_{k=1}^{\infty}c_{k}=c^{*}<\infty$). 

The distribution $\pi_{k,T}$ is inferred from some transformations explained below. Observe first that if $a\leq b$ we have $s_{k}(a)\subseteq s_{k}(b)$. If $\btheta\in s_{k}(1)$ then $\begin{bmatrix} \lambda\theta_{1} & \ldots & \lambda^{k}\theta_{k}\end{bmatrix}^\prime\in s_{k}(1)$ for any $\lambda\in(-1,1)$. Let us set
$$
\lambda_{T}(\btheta) =  \min\left\{1,\frac{\log T-1}{\|\btheta\|_{1}}\right\}\;.
$$
We define $F_{k,T}(\btheta)=\begin{bmatrix} \lambda_{T}(\btheta)\theta_{1} & \ldots & \lambda_{T}^{k}(\btheta)\theta_{k} & 0 & \ldots & 0\end{bmatrix}^\prime\in\R^{d_{T}}$. Remark that for any $\btheta\in s_{k}(1)$, $\|F_{k,T}(\btheta)\|_{1}\leq \lambda_{T}(\btheta)\|\btheta\|_{1}\leq \log T-1$. This gives us an idea to generate vectors in $\Theta_{k,T}$. Our distribution $\pi_{k,T}$ is deduced from: \\
\\
\begin{algorithm}[H] \caption{$\pi_{k,T}$ generation} \label{algorithm:pi_kT_generation}
\KwIn{an effective dimension $k$, the number of observations $T$ and $F_{k,T}$}
generate a random $\btheta$ uniformly on $s_{k}(1)$\;
\Return{$F_{k,T}\left(\btheta\right)$}
\end{algorithm}
\vspace{0.3cm}
The distribution $\pi_{k,T}$ is lower bounded by the uniform distribution on $s_{k}(1)$.

Provided any $\gamma\geq 1$, let $T_{*}=\min\{T: d_{T}\geq d^{\gamma}, \log T\geq d^{1/2}2^{d}\}$. Since $s_{k}(1)\subseteq B(\bm 0, 2^{k}~-~1)$ (see \cite[Lemma 1]{Moulines_Priouret_Roueff:2005}) and $k^{1/2}\|\btheta\|\geq\|\btheta\|_{1}$ for any $\btheta\in\R^{k}$, the constraint $\|\btheta\|_{1}\leq\log T-1$  becomes redundant in $\Theta_{k,T}$ for $1\leq k\leq d$ and $T\geq T_{*}$, i.e. $\Theta_{1,T}=s_{1}(1)\times\{0\}^{d_{T}-1}$ and $\Theta_{k,T}=s_{k}(1)\times\{0\}^{d_{T}-k}\backslash\Theta_{k-1,T}$ for $2\leq k\leq d$. We define the sequence of Assumption~\ref{assumption:theta_balls} as $\btheta_{T}=\bm 0$ for $T<T_{*}$ and $\btheta_{T}=\arg\inf_{\btheta\in\Theta_{T}}R(\btheta)$ for $T\geq T_{*}$. Remark that the first $d$ components of $\btheta_{T}$ are constant for $T\geq T_{*}$ (they correspond to the $\btheta\in\R^{d}$ generating the AR$(d)$ process), and the last $d_{T}-d$ are zero. Let $\Delta_{1*}=2\log 2-1$. Then, we have for $T<T_{*}$ and all $\Delta\in[0,\Delta_{1*}]$
$$
\pi_{T}\left[B\left(\btheta_{T},\Delta\right)\cap\Theta_{T}\right]\geq c_{1,T}\pi_{1,T}\left[B\left(\bm 0,\Delta\right)\cap s_{1}(1)\times\{0\}^{d_{T}-1}\right] \geq \frac{c_{1}}{c^{*}}\Delta\;.
$$ 
Furthermore, for $T\geq T_{*}$ and $\Delta\in[0,\Delta_{d*}]$
$$
\pi_{T}\left[B\left(\btheta_{T},\Delta\right)\cap\Theta_{T}\right]\geq c_{d,T}\pi_{d,T}\left[B\left(\btheta_{T},\Delta\right)\cap s_{d}(1)\times\{0\}^{d_{T}-d}\right] \geq \frac{c_{d}}{2^{d^2}c^{*}}\Delta^{d}\;.
$$ 
Assumption~\ref{assumption:theta_balls} is then fulfilled for any $\gamma\geq 1$ with 
\begin{eqnarray*}
\mathcal{C}_{1}&=&\max\left\{0,(R\left(0\right)-\inf_{\btheta\in\Theta_{T}} R\left(\btheta\right))T^{1/2}\log^{-3}T, 4\leq T<T_{*}\right\} \\
\mathcal{C}_{2}&=&\min\left\{1,\frac{c_{1}}{c^{*}},\frac{c_{d}}{2^{d^{2}}c^{*}}\right\} \\
\mathcal{C}_{3}&=&\min\left\{1,4\Delta_{1*},T_{*}\Delta_{d*}\right\}\;.
\end{eqnarray*}
Let $q_{\eta,T}$ be the constant function $1$, this means that the proposal has the same distribution $\pi_{T}$. Let us bound the ratio (\ref{equation:convergence_independent_Hastings}).
\begin{eqnarray}
\beta_{\eta,T}\left(X\right) = \inf\limits_{\btheta\in\Theta_{T}}\frac{q_{\eta,T}\left(\btheta\right)}{\pi_{\eta,T}\left(\btheta\left|X\right.\right)} &=& \inf\limits_{\btheta\in\Theta_{T}}\frac{\displaystyle\sum\limits_{k=1}^{d_{T}}c_{k,T}\int\limits_{\Theta_{k,T}}\exp\left(-\eta r_{T}\left(z\left|X\right.\right)\right)\pi_{k,T}\left(\rmd\bm z\right)}{\exp\left(-\eta r_{T}\left(\btheta\left|X\right.\right)\right)} \nonumber\\
&\geq& \sum\limits_{k=1}^{d_{T}}c_{k,T}\int\limits_{\Theta_{k,T}}\exp\left(-\eta r_{T}\left(z\left|X\right.\right)\right)\pi_{k,T}\left(\rmd\bm z\right) > 0\;. \label{equation:bound_beta}
\end{eqnarray}
Now note that 
\begin{eqnarray}
\left|x_{t}-f_{\btheta}\left(\left(x_{t-i}\right)_{i\geq 1}\right)\right| \leq \left|x_{t}\right|+\sum\limits_{j=1}^{d\left(\btheta\right)}\left|\theta_{j}\right|\left|x_{t-j}\right| \leq \log T\max\limits_{j=0,\ldots,d\left(\btheta\right)}\left|x_{t-j}\right|\;. \label{equation:deterministic_bound_sum_max}
\end{eqnarray}
Plugging the bound (\ref{equation:deterministic_bound_sum_max}) on (\ref{equation:bound_beta}) with $\eta=\eta_{T}$
$$
\beta_{\eta_{T},T}\left(\bm x\right) \geq \sum\limits_{k=1}^{d_{T}}c_{k}\int\limits_{\Theta_{k}}\exp\left(-\eta_{T}r_{T}\left(z\left|\bm x\right.\right)\right)\pi_{k}\left(\rmd\bm z\right) \geq \exp\left(-\frac{T^{1/2}}{4}\max\limits_{j=0,\ldots,d_{T}}\left|x_{t-j}\right|\right)\;,
$$
we deduce that 
\begin{eqnarray}
\frac{1}{\beta_{\eta_{T},T}\left(\bm x\right)} \leq \sum\limits_{k=0}^{d_{T}} \exp\left(\frac{T^{1/2}\left|x_{t-j}\right|}{4}\right)\;. \label{equation:beta_bound}
\end{eqnarray}
Taking (\ref{equation:beta_bound}) into account, setting $\gamma=1$ (thus $d_{T}=\lfloor\log T\rfloor$), using Assumption~\ref{assumption:theta_Lipschitz}, that $K=1$ and applying the Cauchy-Schwarz inequality we get
\begin{align*}
A_{\eta_{T},T} &= 3K\int\limits_{\mathcal{X}^{\Z}}\frac{1}{\beta_{\eta_{T},T}\left(\bm x\right)}\int\limits_{\mathcal{X}^{\Z}}\sup\limits_{\btheta\in\Theta_{T}}\left|f_{\btheta}\left(\bm y\right)-f_{\hat{\btheta}_{\eta_{T},T}\left(\bm x\right)}\left(\bm y\right)\right|\pi_{0}\left(\rmd\bm y\right)\pi_{0}\left(\rmd\bm x\right)\\
&\leq 3\left(d_{T}+1\right)d_{T}^{1/2}\pi_{0}\left[\exp\left(\frac{T^{1/2}\left|X_{1}\right|}{4}\right)\right]\pi_{0}\left[\left|X_{1}\right|\right]\sup\limits_{\btheta\in\Theta_{T}}\left|\left|\btheta\right|\right| \\
&\leq 6\log^{3/2} T\pi_{0}\left[\exp\left(\frac{T^{1/2}\left|X_{1}\right|}{4}\right)\right]\pi_{0}\left[\left|X_{1}\right|\right] \,.
\end{align*}
As $X_{1}$ is centered and normally distributed of variance $\gamma_{0}$, $\pi_{0}\left[\left|X_{1}\right|\right] = \left(2\gamma_{0}/\pi\right)^{1/2}$ and $\pi_{0}[\exp(T^{1/2}\left|X_{1}\right|/4)] = \gamma_{0}T^{1/2}\exp(\gamma_{0}T/32)/4$.

From $n\geq M^{*}\left(T,\varepsilon\right)=9 \gamma_{0}^{3}T^{2}\exp\left(\gamma_{0}T/16\right)/(2\pi\varepsilon^{2}\log^{3}T)$ the result of Theorem~\ref{theorem:oracle_numerical_Gibbs} is reached. This bound of $M\left(T,\varepsilon\right)$ is prohibitive from a computational viewpoint. That is why we limit the number of iterations to a fixed $n^{*}$.

What we obtain from MCMC is $\bar{f}_{\eta_{T},T,n}\left(\bm y\left|X\right.\right)=\bar{\btheta}'_{\eta_{T},T,n}\left(X\right)\bm y_{1:d_{T}}$ with $\bar{\btheta}_{\eta_{T},T,n}\left(X\right)=\sum_{i=0}^{n-1}\btheta_{\eta_{T},T,i}\left(X\right)/n$. Remark that  $\bar{f}_{\eta_{T},T,n}\left(\cdot\left|X\right.\right)=f_{\bar{\btheta}_{\eta_{T},T,n}\left(X\right)}$. The risk is expressed as 
$$
R\left(\bar{f}_{\eta_{T},T,n}\left(\cdot\left|X\right.\right)\right)=\frac{\left(2\left(\bar{\btheta}_{\eta_{T},T,n}\left(X\right)-\btheta\right)'\Gamma\left(Y\right)\left(\bar{\btheta}_{\eta_{T},T,n}\left(X\right)-\btheta\right)+2\sigma^{2}\right)^{1/2}}{\pi^{1/2}}\;.
$$

\subsection{Numerical work}
Consider $100$ realisations of an autoregressive processes $X$ simulated with the same $\btheta\in s_{d}\left(\delta\right)$ for $d=8$ and $\delta=3/4$ and with $\sigma=1$. Let $\bm c^{(i)}$, $i=1,2$ the sequences defining two different priors in the model order:
\begin{enumerate}
\item $c_{k}^{(1)}=k^{-2}$, the sparsity is favoured,
\item $c_{k}^{(2)}=\rme^{-k}$, the sparsity is strongly favoured.
\end{enumerate}
For each sequence $\bm c$ and for each value of 
$T\in\{2^{j}, j=6,\ldots,12\}$ we compute $\bar{\btheta}_{\eta_{T},T,n^{*}}$, the MCMC approximation of the Gibbs estimator using Algorithm~\ref{algorithm:independent_hastings_sampler} with $\eta=\eta_{T}$.\\
\\
\begin{algorithm}[H] \caption{Independent Hastings Sampler} \label{algorithm:independent_hastings_sampler}
\KwIn{the sample $X_{1:T}$ of $X$, the prior $\bm c$, the learning rate $\eta$, the generators $\pi_{k,T}$ for $k=1,\ldots,d_{T}$ and a maximum iterations number $n^{*}$}
\KwInit{$\btheta_{\eta,T,0}=\bm 0$}
\For{i=1 \KwTo $n^{*}-1$}{
generate $k\in\{1,\ldots,d_{T}\}$ using the prior $\bm c$\;
generate $\btheta_{candidate}\sim \pi_{k,T}$\;
generate $U\sim\mathcal{U}(0,1)$\;
\If {$U\leq \alpha_{\eta,T,X}(\btheta_{\eta,T,i-1},\btheta_{candidate})$}{
    {$\btheta_{\eta,T,i}=\btheta_{candidate}$}
\Else
    {$\btheta_{\eta,T,i}=\btheta_{\eta,T,i-1}$\;}
}
}
\Return{$\bar{\btheta}_{\eta,T,n^{*}}\left(X\right)=\sum_{i=0}^{n^{*}-1}\btheta_{\eta,T,k}\left(X\right)/n^{*}$.}
\end{algorithm}
\vspace{0.3cm}
The acceptance rate is computed as $\alpha_{\eta,T,X}(\btheta_{1},\btheta_{2})=\exp\left(\eta r_{T}\left(\btheta_{1}\left|X\right.\right)-\eta r_{T}\left(\btheta_{2}\left|X\right.\right)\right)$.

Algorithm \ref{algorithm:pi_kT_generation} used by the distributions $\pi_{k,T}$ generates uniform random vectors on $s_{k}\left(1\right)$ by the method described in \cite{Beadle_Djuric:1997}. It relies in the Levinson-Durbin
recursion algorithm. We also implemented the numerical improvements of \cite{Andrieu_Doucet:1999}.

Set $\varepsilon=0.1$. Figure~\ref{figure:quantiles} displays the $(1-\varepsilon)$-quantiles in data $R(\bar{\btheta}_{\eta_{T},T,n^{*}}\left(X\right))-(2/\pi)^{1/2}\sigma^{2}$ for $\bm c^{(1)}$ and $\bm c^{(2)}$ using different values of $n^{*}$.

\begin{figure}[!h]
\begin{minipage}[t]{0.4\linewidth}
\centering
\includegraphics[width=5.7cm]{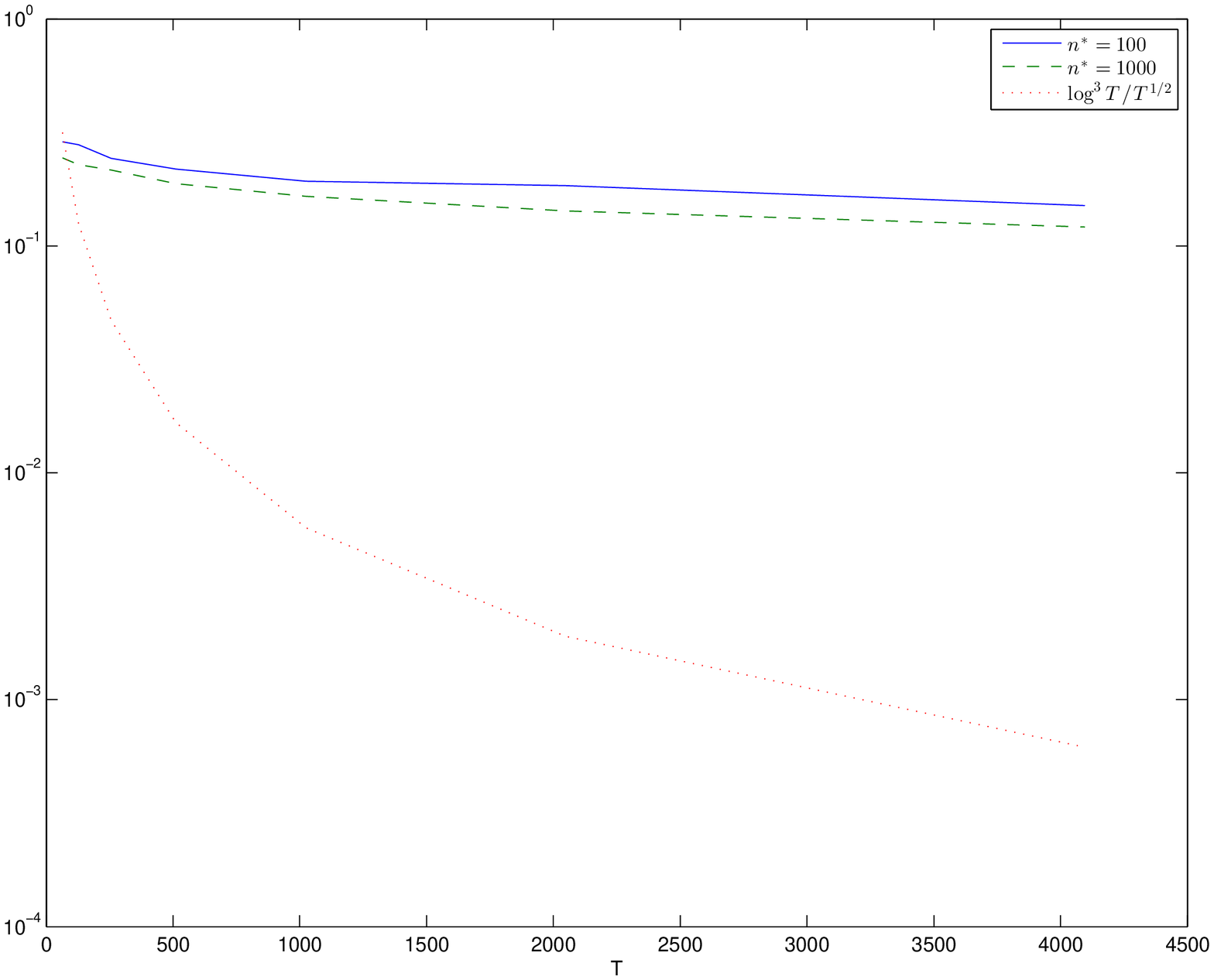}
\end{minipage}
\hspace{1.2cm}
\begin{minipage}[t]{0.4\linewidth}
\centering
\includegraphics[width=5.7cm]{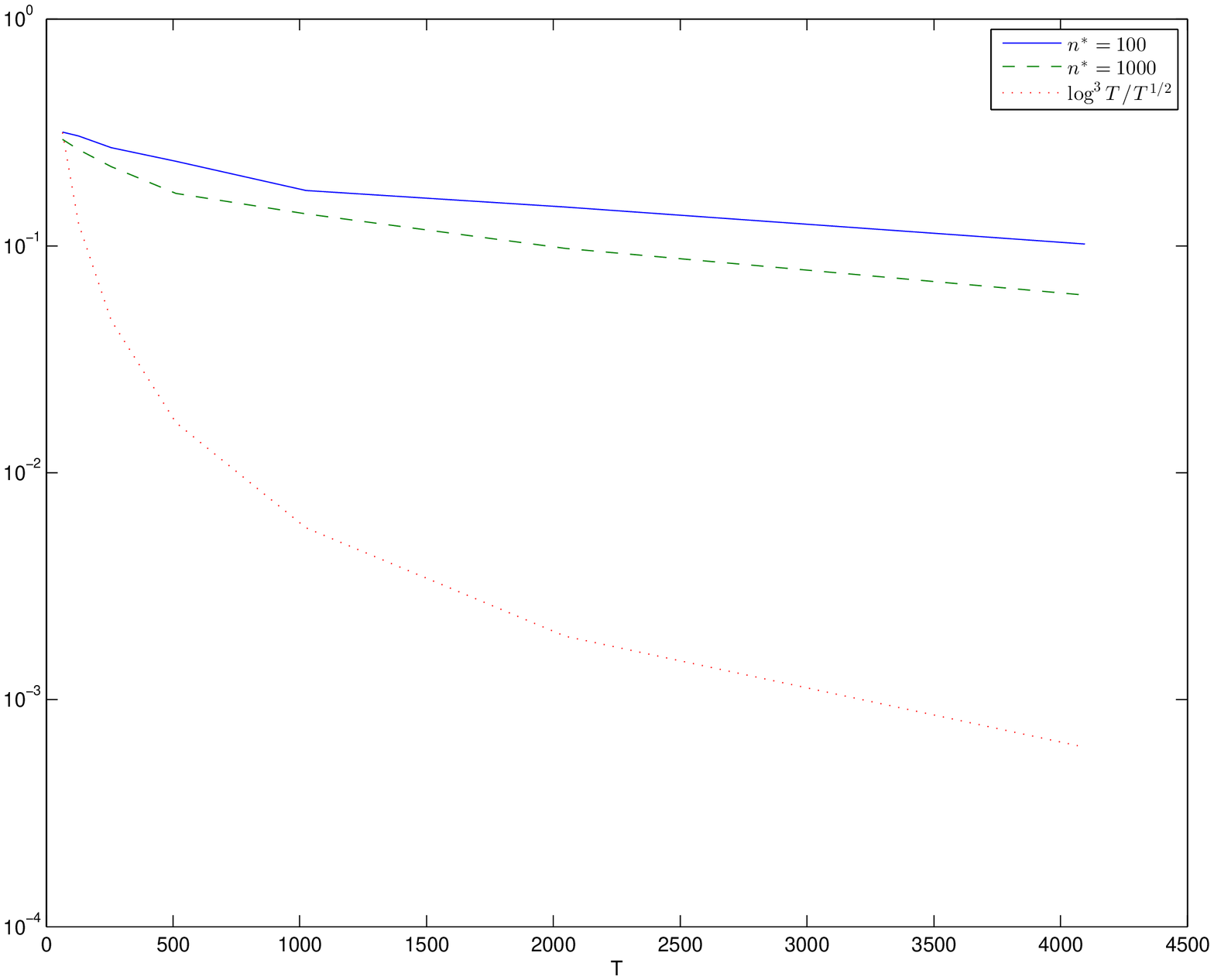}
\end{minipage}
\caption{The plots represent the $0.9$-quantiles in data $R(\bar{\btheta}_{\eta_{T},T,n^{*}}\left(X\right))-(2/\pi)^{1/2}\sigma^{2}$ for $T=32,64,\ldots,4096$. The graph on the left corresponds to the order prior $c_{k}^{(1)}=k^{-2}$ while that on the right corresponds to $c_{k}^{(2)}=\rme^{-k}$. The solid curves were plotted with $n^{*}=100$, the dashed ones with $n^{*}=1000$ and as a reference, the dotted curve is proportional to $\log^{3}T/T^{1/2}$.}
\label{figure:quantiles} 
\end{figure}

Note that, for the proposed algorithm the prediction risk decreases very slowly when the number $T$ of observations grows and the number of MCMC iterations remains constant. If $n^{*}=1000$ the decaying rate is faster than if $n^{*}=100$ for smaller values of $T$. For $T\geq 2000$ we observe that both rates are roughly the same in the logarithmic scale. This behaviour is similar in both cases presented in Figure~\ref{figure:quantiles}. As expected, the risk of the approximated predictor does not converge as $\log^{3}T/T^{1/2}$. 

\section{Discussion} \label{section:discussion}
There are two sources of error in our method: prediction (of the exact Gibbs predictor) and approximation (using the MCMC). The first one decays when $T$ grows and the obtained guarantees for the second one 
explode. We found a possibly pessimistic upper bound for $M(T,\epsilon)$. The exponential growing of this bound is the main weakness of our procedure. The use of a better adapted proposal in the MCMC algorithm needs to be investigated. The Metropolis Langevin Algorithm (see \cite{Atchade:2006}) gives us an insight in this direction. However it is encouraging to see that, in the analysed practical case, the risk of $\bar{f}_{\eta_{T},T,n^{*}}\left(\cdot\left|X\right.\right)$ does not increase with $T$. 

\section*{Acknowledgements}
The author is specially thankful to Fran\c{c}ois Roueff, Christophe Giraud, Peter Weyer-Brown and the two referees for their extremely careful readings and highly pertinent remarks which substantially improved the paper. This work has been partially supported by the Conseil r\'egional
d'\^{I}le-de-France under a doctoral allowance of its program R\'eseau de
Recherche Doctoral en Math\'ematiques de l'\^{I}le de France (RDM-IdF) for the
period 2012 - 2015 and by the Labex LMH (ANR-11-IDEX-003-02).

\section{Technical proofs} \label{section:technical_proofs}

\subsection{Proof of Theorem~\ref{theorem:oracle_Gibbs}} \label{subsection:proof_theorem_oracle_Gibbs}

The proof of Theorem~\ref{theorem:oracle_Gibbs} is based on the same tools used by \cite{Alquier_Wintenberger:2012} up to Lemma~\ref{lemma:exponential_r_T}. For the sake of completeness we quote the essential ones.

We denote by $\mathcal{M}_{+}^{1}\left(F\right)$ the set of probability measures on the measurable space $(F,\mathcal{F})$. Let $\rho,\nu\in \mathcal{M}_{+}^{1}\left(F\right)$, $\K\left(\rho,\nu\right)$ stands for the Kullback-Leibler divergence of $\nu$ from $\rho$.
\begin{eqnarray*}
\K\left(\rho,\nu\right) &=& \left\{
\begin{array}{ll}
\int\log\frac{\rmd\rho}{\rmd\nu}\left(\btheta\right)\rho\left(\rmd\btheta\right) & \textrm{, if } \rho\ll\nu \\
+\infty & \textrm{, otherwise\;.}
\end{array}
\right.
\end{eqnarray*}
The first lemma can be found in \cite[Equation 5.2.1]{Catoni:2004}.
\begin{lemma}[Legendre transform of the Kullback divergence function] \label{lemma:Legendre_KB_divergence} Let $(F,\mathcal{F})$ be any measurable space. For any $\nu\in\mathcal{M}_{+}^{1}\left(F\right)$ and any measurable function $h~:~F~\rightarrow~\R$ such that $\nu\left[\exp\left(h\right)\right]<\infty$ we have,
\begin{eqnarray*}
\nu\left[\exp\left(h\right)\right] &=& \exp\left(\sup\limits_{\rho\in\mathcal{M}_{+}^{1}\left(F\right)}\left(\rho\left[h\right]-\mathcal{K}\left(\rho,\nu\right)\right)\right) \;,
\end{eqnarray*}
with the convention $\infty-\infty=-\infty$. Moreover, as soon as $h$ is upper-bounded on the support of $\nu$, the supremum with respect to $\rho$ in the right-hand side is reached by the Gibbs measure $\nu\left\{h\right\}$. 
\end{lemma}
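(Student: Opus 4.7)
The plan is to reduce the identity to a single algebraic rearrangement that exhibits $\rho[h] - \K(\rho,\nu)$ as $\log \nu[\exp(h)]$ minus a Kullback--Leibler divergence from the Gibbs measure, which is automatically non-negative. Set $Z = \nu\left[\exp\left(h\right)\right] \in (0,\infty)$ and let $\mu = \nu\{h\}$, whose density $\rmd\mu/\rmd\nu = \exp(h)/Z$ is strictly positive. Then $\rho \ll \nu$ if and only if $\rho \ll \mu$, and for any such $\rho$ the chain rule gives $\rmd\rho/\rmd\mu = (Z\exp(-h))\,\rmd\rho/\rmd\nu$. Taking logarithms and integrating against $\rho$ yields the key identity
\begin{eqnarray*}
\K\left(\rho, \mu\right) &=& \K\left(\rho, \nu\right) - \rho\left[h\right] + \log Z \;.
\end{eqnarray*}

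Rearranging produces $\rho[h] - \K(\rho,\nu) = \log Z - \K(\rho,\mu) \leq \log Z$ by non-negativity of the Kullback--Leibler divergence. For $\rho \not\ll \nu$ the convention $\K(\rho,\nu) = +\infty$ combined with $\infty-\infty=-\infty$ makes the inequality trivial. Supremising over $\rho \in \mathcal{M}_{+}^{1}(F)$ and exponentiating then gives the $\leq$ direction of the claimed equality, so only the attainment of the supremum remains.

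For attainment under the assumption that $h$ is upper-bounded on the support of $\nu$, I would plug $\rho = \mu$ into the key identity, producing $\mu[h] - \K(\mu,\nu) = \log Z$; the only thing to verify is that $\mu[h]$ is finite, so that this is not an indeterminate form. The upper bound on $h$ delivers $\mu[h] \leq \sup_{\mathrm{supp}\,\nu} h < \infty$ immediately since $\mu \ll \nu$, while the already-established inequality applied at $\rho = \mu$ combined with $\K(\mu,\mu) = 0$ gives $\mu[h] \geq \log Z > -\infty$. Hence the Gibbs measure $\nu\{h\}$ attains the supremum, proving the second assertion (uniqueness of the maximiser then follows from the strict non-negativity $\K(\rho,\mu) > 0$ for $\rho \neq \mu$).

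The main subtlety I would watch for is the integrability bookkeeping in the key identity when neither $\rho[h]$ nor $\K(\rho,\nu)$ is assumed finite: the manipulation has to be interpreted consistently in the extended sense $[-\infty,+\infty]$. Fortunately the inequality direction is straightforward to verify in each boundary case (if $\K(\rho,\nu) = +\infty$ the convention applies; if $\rho[h] = -\infty$ the bound is trivial), so the substantive content is concentrated in the regime where everything is finite, where the algebra is elementary and the whole argument compresses into two lines.
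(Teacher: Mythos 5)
Your route --- the compensation identity $\K(\rho,\nu\{h\}) = \K(\rho,\nu) - \rho[h] + \log\nu[\exp(h)]$, which exhibits $\rho[h]-\K(\rho,\nu)$ as $\log\nu[\exp(h)]$ minus a non-negative divergence --- is the standard argument and essentially the one in Catoni's monograph; the paper itself gives no proof of this lemma, only the citation, so there is no alternative route to compare against. Your treatment of the upper bound and of the attainment under the upper-boundedness hypothesis is correct (in particular the two-sided control of $\mu[h]$ to avoid an indeterminate form is the right care to take). One bookkeeping remark: your explicit list of boundary cases omits the only one in which the inequality could conceivably fail, namely $\K(\rho,\nu)<\infty$ together with $\rho[h]=+\infty$, which would give $+\infty$ on the left. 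It is excluded precisely by reading your key identity in the other direction: $0\le\K(\rho,\nu\{h\})=\K(\rho,\nu)-\rho[h]+\log\nu[\exp(h)]$ forces $\rho[h]\le\K(\rho,\nu)+\log\nu[\exp(h)]<\infty$. This deserves to be said explicitly rather than left implicit.

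The genuine gap concerns the first assertion of the lemma, the \emph{equality} for an arbitrary measurable $h$ with $\nu[\exp(h)]<\infty$. You write that after the ``$\le$'' direction ``only the attainment of the supremum remains'', but equality of the supremum with $\log\nu[\exp(h)]$ and attainment of the supremum are different claims, and attainment can genuinely fail when $h$ is not upper-bounded on the support of $\nu$: one can have $\nu[\exp(h)]<\infty$ while $\nu[h^{+}\exp(h)]=+\infty$, in which case $\nu\{h\}[h]=+\infty$ and $\K(\nu\{h\},\nu)=+\infty$, so that under the stated convention $\nu\{h\}[h]-\K(\nu\{h\},\nu)=-\infty$ and the Gibbs measure does not realise the supremum. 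Your argument therefore only yields ``$\le$'' in that regime, whereas the lemma asserts equality (the upper-boundedness hypothesis appears only in the ``moreover'' clause). The fix is a short truncation argument: set $A_{n}=\{h\le n\}$, $Z_{n}=\nu\left[\exp(h)\1_{A_{n}}\right]$, and let $\rho_{n}$ have density $\exp(h)\1_{A_{n}}/Z_{n}$ with respect to $\nu$. Then $\rho_{n}[h]$ is finite (bounded above by $n$, and bounded below since $x\exp(x)\ge-\rme^{-1}$), $\K(\rho_{n},\nu)=\rho_{n}[h]-\log Z_{n}$, hence $\rho_{n}[h]-\K(\rho_{n},\nu)=\log Z_{n}\rightarrow\log\nu[\exp(h)]$ by monotone convergence, which supplies the missing lower bound on the supremum.
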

For a fixed $C>0$, let  $\widetilde{\xi}_{t}^{\left(C\right)}=\max\left\{\min\left\{\xi_{t},C\right\},-C\right\}$. Consider $\widetilde{X}_{t}=H(\widetilde{\xi}_{t}^{\left(C\right)},\widetilde{\xi}_{t-1}^{\left(C\right)},\ldots)$. 

Denote $\widetilde{X}=(\widetilde{X}_{t})_{t\in\Z}$ and by $\widetilde{R}\left(\btheta\right)$ and $\widetilde{r}_{T}\left(\btheta\left|\widetilde{X}\right.\right)$ the respective exact and empirical risks associated with $\widetilde{X}$ in $\btheta$. 
\begin{eqnarray*}
\widetilde{R}\left(\btheta\right) &=& \EE\left[\ell\left(\widehat{\tilde{X}}_{t}^{\btheta},\widetilde{X}_{t}\right)\right] \;, \\
\widetilde{r}_{T}\left(\btheta\left|\widetilde{X}\right.\right) &=& \frac{1}{T-d\left(\btheta\right)}\sum\limits_{t=d\left(\btheta\right)+1}^{T}\ell\left(\widehat{\tilde{X}}_{t}^{\btheta},\widetilde{X}_{t}\right) \;,
\end{eqnarray*}
where $\widehat{\tilde{X}}_{t}^{\btheta} = f_{\btheta}((\widetilde{X}_{t-i})_{i\geq 1})$.

This thresholding is interesting because truncated CBS are weakly dependent processes (see \cite[Section 4.2]{Alquier_Wintenberger:2012}).

A Hoeffding type inequality introduced in \cite[Theorem 1]{Rio:2000} provides useful controls on the difference between empirical and exact risks of a truncated process.

\begin{lemma}[Laplace transform of the risk] \label{lemma:Laplace_risk}
Let $\ell$ be a loss function meeting Assumption~\ref{assumption:Lipschitz_loss} and $X=\left(X_{t}\right)_{t\in\Z}$ a process satisfying Assumption~\ref{hyp:CBS}. For all $T\geq 2$, any $\left\{f_{\btheta},\btheta\in\Theta_{T}\right\}$ satisfying Assumption~\ref{assumption:Lipschitz}, $\Theta_{T}$ such that $d_{T}$, defined in (\ref{equation:d_T}), is at most $T/2$, any truncation level $C>0$, $\eta\geq 0$ and $\btheta\in\Theta_{T}$ we have,
\begin{eqnarray}
\EE\left[\exp\left(\eta\left(\widetilde{R}(\btheta)-\widetilde{r}_{T}\left(\btheta\left|\widetilde{X}\right.\right)\right)\right)\right] &\leq& \exp\left(\frac{4\eta^{2}k^{2}(T,C)}{T}\right) \;, \label{inequality:Laplace_R_r}
\end{eqnarray}
and 
\begin{eqnarray}
\EE\left[\exp\left(\eta\left(\widetilde{r}_{T}\left(\btheta\left|\widetilde{X}\right.\right)-\widetilde{R}(\btheta)\right)\right)\right] &\leq& \exp\left(\frac{4\eta^{2}k^{2}(T,C)}{T}\right) \;, \label{inequality:Laplace_r_R}
\end{eqnarray}
where $k(T,C)=2^{1/2}CK(1+L_{T})\left(A_{*}+\tilde{A}_{*}\right)$. The constants $\tilde{A}_{*}$ and $A_{*}$ are defined in (\ref{equation:tilde_A}) and
(\ref{equation:A}) respectively, $K$ and $L_{T}$ in Assumptions \ref{assumption:Lipschitz_loss} and \ref{assumption:Lipschitz} respectively.
\end{lemma}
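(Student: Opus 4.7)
My plan is to recast the centered empirical risk as a Lipschitz function of the independent bounded variables $(\widetilde{\xi}_s)_{s \in \Z}$ and apply a Hoeffding/McDiarmid-type sub-Gaussian bound in the spirit of Rio's Theorem~1 cited just above. Set $F_t(\btheta) = g(\widehat{\tilde{X}}_t^{\btheta} - \widetilde{X}_t)$ and $Z = \sum_{t=d(\btheta)+1}^T F_t(\btheta)$, so that the left-hand side of (\ref{inequality:Laplace_R_r}) equals $\EE[\exp((\eta/(T-d(\btheta)))(\EE Z - Z))]$. By causality of the representation and the $A$-Lipschitz property of $H$, each $F_t(\btheta)$ is a measurable function of $(\widetilde{\xi}_s)_{s \leq t}$; after truncation each coordinate takes values in $[-C,C]$ and the coordinates are mutually independent.

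Next, composing the Lipschitz constants of $g$ (constant $K$, Assumption~\ref{assumption:Lipschitz_loss}), of $f_{\btheta}$ (weights $b_j(\btheta)$ summing to at most $L_T$, Assumption~\ref{assumption:Lipschitz}), and of $H$ (coefficients $A_j$), one obtains that the partial Lipschitz coefficient of $F_t(\btheta)$ with respect to $\widetilde{\xi}_s$, valid for $s \leq t$, is at most $\alpha_{t,s,\btheta} = K(A_{t-s} + \sum_{j=1}^{\min(d(\btheta),\, t-s)} b_j(\btheta) A_{t-s-j})$. Summing over $t \in \{d(\btheta)+1,\ldots,T\}$ yields the global Lipschitz coefficient $L_s$ of $Z$ with respect to $\widetilde{\xi}_s$. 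A double-sum rearrangement, relabelling via $i = t - s$ and invoking $\sum_j b_j(\btheta) \leq L_T$, $\sum_i A_i = A_*$, and $\sum_i i A_i = \tilde{A}_*$, delivers an estimate of the form $\sum_s L_s^2 \leq 2(T - d(\btheta)) K^2 (1+L_T)^2 (A_* + \tilde{A}_*)^2$. The $\tilde{A}_*$ contribution arises from the innovations $\widetilde{\xi}_s$ indexed outside the observation window $\{1,\ldots,T\}$, where tail sums of the form $\sum_{i \geq r} A_i$ must themselves be summed over the ``distance'' $r$, bringing in $\sum_i i A_i$.

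Finally, I would apply Azuma's inequality (equivalently, Hoeffding's lemma for bounded martingale increments) to the martingale differences $\Delta_s = \EE[Z \mid \mathcal{F}_s] - \EE[Z \mid \mathcal{F}_{s-1}]$ along the filtration $\mathcal{F}_s = \sigma(\widetilde{\xi}_u,\, u \leq s)$. These satisfy $|\Delta_s| \leq 2C L_s$, so iterated conditioning yields $\EE[\exp(\eta'(\EE Z - Z))] \leq \exp(2 (\eta')^2 C^2 \sum_s L_s^2)$ for any $\eta' \geq 0$. Substituting the estimate on $\sum_s L_s^2$, setting $\eta' = \eta/(T - d(\btheta))$ and using $T - d(\btheta) \geq T/2$ yields (\ref{inequality:Laplace_R_r}); the symmetric inequality (\ref{inequality:Laplace_r_R}) follows by applying the same argument to $-Z$. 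The main obstacle is the careful bookkeeping inside $\sum_s L_s^2$: splitting according to whether $s$ lies inside or outside $\{1,\ldots,T\}$ and using the summability of $j \mapsto j A_j$ to absorb the tail contributions from pre-observation innovations without losing the sharp linear dependence on $T - d(\btheta)$.
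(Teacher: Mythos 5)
Your argument is correct, and it takes a genuinely different route from the paper's. The paper does not prove this lemma from first principles: it imports Rio's Hoeffding-type inequality (Theorem 1 of \cite{Rio:2000}) for separately Lipschitz functionals of the \emph{dependent} truncated sequence $(\widetilde{X}_{t})$, following \cite{Alquier_Wintenberger:2012}, and it is the weak-dependence coefficients of a CBS that bring the combination $A_{*}+\tilde{A}_{*}$ into $k(T,C)$. You instead unroll the Bernoulli shift down to the i.i.d. bounded innovations $(\widetilde{\xi}_{s}^{(C)})_{s\leq T}$ and run Azuma--Hoeffding along the innovation filtration. The steps check out: your $\alpha_{t,s,\btheta}$ is the correct composition of the Lipschitz constants $K$, $b_{j}(\btheta)$ and $A_{j}$; the increments satisfy $|\Delta_{s}|\leq 2CL_{s}$ since the truncated innovations range over a set of diameter $2C$; Hoeffding's lemma with iterated conditioning gives exactly your factor $2(\eta')^{2}C^{2}\sum_{s}L_{s}^{2}$ (namely $(2CL_{s})^{2}/2$ summed); and $T-d(\btheta)\geq T/2$ closes the computation. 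Two remarks. First, your bookkeeping is more generous than it needs to be: since $\sum_{s\leq T}L_{s}\leq (T-d(\btheta))K(1+L_{T})A_{*}$ and $\sup_{s}L_{s}\leq K(1+L_{T})A_{*}$, one gets $\sum_{s}L_{s}^{2}\leq (T-d(\btheta))K^{2}(1+L_{T})^{2}A_{*}^{2}$ directly, so the innovation-side argument never actually needs $\tilde{A}_{*}$ --- the pre-window tail contributions are already absorbed into $A_{*}$ --- and your route proves the lemma with $A_{*}$ in place of $A_{*}+\tilde{A}_{*}$; the $\tilde{A}_{*}$ term is an artefact of the dependent-sequence (Rio) formulation, not of yours. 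Your claimed bound $\sum_{s}L_{s}^{2}\leq 2(T-d(\btheta))K^{2}(1+L_{T})^{2}(A_{*}+\tilde{A}_{*})^{2}$ is therefore true a fortiori. Second, you should say a word about the fact that the martingale is indexed by $s\in(-\infty,T]$: one needs $\sum_{s}L_{s}^{2}<\infty$ for $L^{2}$-convergence and triviality of the tail $\sigma$-field of the i.i.d. innovations to identify $\lim_{s\to-\infty}\EE[Z\mid\mathcal{F}_{s}]=\EE[Z]$, and boundedness of the truncated process to guarantee that all exponential moments exist. With those points made explicit, both (\ref{inequality:Laplace_R_r}) and, by applying the same bound to $-Z$, (\ref{inequality:Laplace_r_R}) follow; what your approach buys is a self-contained, more elementary proof with a slightly sharper constant, at the cost of using the explicit causal Bernoulli shift representation rather than only its weak-dependence coefficients.
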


The following lemma is a slight modification of \cite[Lemma 6.5]{Alquier_Wintenberger:2012}. It links the two versions of the empirical risk: original and truncated.
\begin{lemma} \label{lemma:exponential_r_T}
Suppose that Assumption~\ref{assumption:Lipschitz_loss} holds for the loss function $\ell$, Assumption~\ref{hyp:CBS} holds for $X=\left(X_{t}\right)_{t\in\Z}$ and Assumption~\ref{hyp:exponential-moment} holds for the innovations with $\zeta=A_{*}$; $A_{*}$ is defined in 
(\ref{equation:A}). For all $T\geq 2$, any $\left\{f_{\btheta},\btheta\in\Theta_{T}\right\}$ meeting Assumption~\ref{assumption:Lipschitz} with $\Theta_{T}$ such that $d_{T}$, defined in (\ref{equation:d_T}), is at most $T/2$, any truncation level $C>0$ and any $0\leq\eta\leq T/4\left(1+L_{T}\right)$ we have,
\begin{eqnarray*}
\EE\left[\exp\left(\eta\sup\limits_{\btheta\in\Theta_{T}}\left|r_{T}\left(\btheta\left|X\right.\right)-\widetilde{r}_{T}\left(\btheta\left|\widetilde{X}\right.\right)\right|\right)\right] &\leq& \exp\left(\eta\varphi\left(T,C,\eta\right)\right)\;,
\end{eqnarray*}
where 
\begin{eqnarray*}
\varphi(T,C,\eta) &=& 2K(1+L_{T})\phi(A_{*})\left(\frac{A_{*}C}{\exp\left(A_{*}C\right)-1}+\eta\frac{4K(1+L_{T})}{T}\right)\;,
\end{eqnarray*}
with $K$ and $L_{T}$ defined in Assumptions \ref{assumption:Lipschitz_loss} and \ref{assumption:Lipschitz} respectively.
\end{lemma}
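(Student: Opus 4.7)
The plan is to reduce the exponential moment of $\sup_{\btheta}|r_T(\btheta|X)-\widetilde r_T(\btheta|\widetilde X)|$ to a product of independent one-variable exponential moments of the truncation error $\|\xi-\widetilde\xi^{(C)}\|$, by a chain of Lipschitz inequalities followed by the CBS representation and the independence of the innovations. Assumption~\ref{hyp:exponential-moment} then supplies the final integrability.

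First, using the $K$-Lipschitz property of $g$ (Assumption~\ref{assumption:Lipschitz_loss}) and $\sum_{j=1}^{d(\btheta)}b_j(\btheta)\leq L_T$ (Assumption~\ref{assumption:Lipschitz}), for every $\btheta\in\Theta_T$,
\[
|r_T(\btheta|X)-\widetilde r_T(\btheta|\widetilde X)|\leq \frac{K}{T-d(\btheta)}\sum_{t=d(\btheta)+1}^T\!\!\Bigl(\|X_t-\widetilde X_t\|+\!\sum_{j=1}^{d(\btheta)}b_j(\btheta)\|X_{t-j}-\widetilde X_{t-j}\|\Bigr).
\]
Reindexing the inner sum by $s=t-j$ bounds the double sum by $L_T\sum_{s=1}^T\|X_s-\widetilde X_s\|$, and $T-d(\btheta)\geq T/2$ (from $d_T\leq T/2$) gives the uniform deterministic bound
\[
\sup_{\btheta\in\Theta_T}|r_T(\btheta|X)-\widetilde r_T(\btheta|\widetilde X)|\leq \frac{2K(1+L_T)}{T}\sum_{s=1}^T\|X_s-\widetilde X_s\|.
\]

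Second, I invoke the $A$-Lipschitz property of $H$ (Assumption~\ref{hyp:CBS}) to write $\|X_s-\widetilde X_s\|\leq \sum_{j\geq 0}A_j\|\xi_{s-j}-\widetilde\xi_{s-j}^{(C)}\|$. Swapping the order of summation yields
\[
\sum_{s=1}^T\|X_s-\widetilde X_s\|\leq \sum_{k\leq T} B_k^T\,\|\xi_k-\widetilde\xi_k^{(C)}\|,\qquad B_k^T=\!\!\!\sum_{\substack{j\geq 0\\ 1\leq k+j\leq T}}\!\!\! A_j,
\]
with $0\leq B_k^T\leq A_*$ and $\sum_k B_k^T=TA_*$. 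Setting $\mu=\frac{2\eta K(1+L_T)}{T}$ (so that $\mu\leq K/2$ under the hypothesis on $\eta$), and using the independence of the $(\xi_k)_{k\in\Z}$,
\[
\EE\!\left[\exp\!\left(\eta\sup_{\btheta}|r_T-\widetilde r_T|\right)\right]\leq \prod_{k\leq T} \EE\!\left[\exp\!\bigl(\mu B_k^T\,\|\xi-\widetilde\xi^{(C)}\|\bigr)\right].
\]

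Third, I carry out the one-variable estimate. Taking logarithms, using $\log(1+x)\leq x$, and applying the second-order expansion $e^y\leq 1+y+\tfrac12 y^2 e^y$ (valid for $y\geq 0$) to $y=\mu B_k^T\|\xi-\widetilde\xi^{(C)}\|$, each $\log$-factor is bounded by
\[
\mu B_k^T\,\EE\|\xi-\widetilde\xi^{(C)}\| + \tfrac12(\mu B_k^T)^2\,\EE\!\left[\|\xi-\widetilde\xi^{(C)}\|^2 e^{\mu B_k^T\|\xi-\widetilde\xi^{(C)}\|}\right].
\]
The first moment is controlled by Markov's inequality and $\phi(A_*)<\infty$: $\PP(\|\xi\|>C+x)\leq \phi(A_*)e^{-A_*(C+x)}$ gives $\EE\|\xi-\widetilde\xi^{(C)}\|\leq \phi(A_*)e^{-A_*C}/A_*\leq \phi(A_*)C/(e^{A_*C}-1)$ via $1-e^{-u}\leq u$; the quadratic term is handled analogously, with the residual exponential factor absorbed into $\phi(A_*)$ because $\mu B_k^T\leq A_*$. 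Summing over $k$ with $\sum_k B_k^T=TA_*$ and $\sum_k (B_k^T)^2\leq A_*\sum_k B_k^T = TA_*^2$, then substituting $\mu=2\eta K(1+L_T)/T$, reproduces exactly the bias term $2K(1+L_T)\phi(A_*)\,A_*C/(e^{A_*C}-1)$ and the variance term $2K(1+L_T)\phi(A_*)\cdot\eta\cdot 4K(1+L_T)/T$ that compose $\eta\,\varphi(T,C,\eta)$.

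The main obstacle is the one-variable exponential estimate: since only $\phi(A_*)$ is assumed finite, every quantity of the form $\EE[\|\xi-\widetilde\xi^{(C)}\|^p e^{\alpha\|\xi-\widetilde\xi^{(C)}\|}]$ appearing in the Taylor remainder must be reduced to a multiple of $\phi(A_*)$ by keeping the effective exponent $\alpha=\mu B_k^T$ no larger than $A_*$. This is precisely the reason for the constraint $\eta\leq T/[4(1+L_T)]$, and it is the interplay between the Markov tail $e^{-A_*C}$ and the elementary inequality $e^{-A_*C}\leq A_*C/(e^{A_*C}-1)$ that produces the exact bias shape prescribed in $\varphi(T,C,\eta)$.
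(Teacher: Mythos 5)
The paper does not actually prove this lemma: it is imported, with ``slight modification,'' from \cite[Lemma 6.5]{Alquier_Wintenberger:2012}, so there is no in-paper argument to compare against. Your overall route is the standard (and essentially the only) one: reduce $\sup_{\btheta}|r_{T}-\widetilde{r}_{T}|$ to $\frac{2K(1+L_{T})}{T}\sum_{s=1}^{T}\|X_{s}-\widetilde{X}_{s}\|$ via \ref{assumption:Lipschitz_loss}, \ref{assumption:Lipschitz} and $d_{T}\leq T/2$, pass to the innovations through the $A$-Lipschitz property of $H$, factorize by independence, and control each factor with \ref{hyp:exponential-moment}. The first two reductions, the identities $\sum_{k}B_{k}^{T}=TA_{*}$, $\sum_{k}(B_{k}^{T})^{2}\leq TA_{*}^{2}$, and the first-moment estimate $\EE\left[\left(\|\xi\|-C\right)_{+}\right]\leq \phi(A_{*})C/(\rme^{A_{*}C}-1)$ are all correct and do reproduce the first term of $\varphi$ exactly.

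The gap is in the quadratic term. You assert that $\EE\left[Z^{2}\rme^{\mu B_{k}^{T}Z}\right]$ (with $Z=\|\xi-\widetilde{\xi}^{(C)}\|$) can be ``absorbed into $\phi(A_{*})$ because $\mu B_{k}^{T}\leq A_{*}$.'' First, the stated hypothesis on $\eta$ only gives $\mu\leq K/2$, hence $\mu B_{k}^{T}\leq KA_{*}/2$, which exceeds $A_{*}$ as soon as $K>2$; your own earlier remark ``$\mu\le K/2$'' is inconsistent with the later claim ``$\mu B_k^T\le A_*$'' unless $K\le 2$. Second, even at the boundary $\mu B_{k}^{T}=A_{*}$ the claim fails: $\phi(A_{*})<\infty$ does not imply $\EE\left[Z^{2}\rme^{A_{*}Z}\right]<\infty$ (take $\|\xi\|$ with density decaying like $x^{-3}\rme^{-A_{*}x}$). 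Absorbing the factor $Z^{2}$ into the exponential requires strict slack, e.g.\ $\mu\leq c<1$ and $u^{2}\rme^{-(1-c)u}\leq\mathrm{const}$, and it is precisely this bookkeeping that must produce the constant $4K(1+L_{T})/T$ in $\varphi$; you have not carried it out. The problem is not merely presentational: taking $H(\bm u)=u_{0}$ (so $A_{*}=1$), the single zero predictor, $g=K|\cdot|$ with $K=8$, innovations with $\EE[\rme^{\|\xi\|}]<\infty$ but $\EE[\rme^{1.1\|\xi\|}]=\infty$, and $\eta=T/4$, the left-hand side of the claimed inequality is $+\infty$ while the right-hand side is finite. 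So at the extreme of the allowed $\eta$-range the statement cannot be established by your chain of bounds (the condition on $\eta$ needs to involve $K$, as in the Alquier--Wintenberger original); your proof is only valid, and only completable, under an additional restriction such as $\mu=2\eta K(1+L_{T})/T\leq 1/2$, which you should state and then use explicitly in the remainder estimate.
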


Finally we present a result on the aggregated predictor defined in (\ref{equation:Gibbs_predictor}). The proof is partially inspired by that of \cite[Theorem 3.2]{Alquier_Wintenberger:2012}.
\begin{lemma} \label{lemma:risk_bound_inf_probability}
Let $\ell$ be a loss function such that Assumption~\ref{assumption:Lipschitz_loss} holds and let $X~=~\left(X_{t}\right)_{t\in\Z}$ a process satisfying Assumption~\ref{hyp:CBS} with probability distribution $\pi_{0}$. For each $T\geq 2$ let $\left\{f_{\btheta},\btheta\in\Theta_{T}\right\}$ be a set of predictors and $\pi_{T}\in\mathcal{M}_{+}^{1}\left(\Theta_{T}\right)$ any prior probability distribution on $\Theta_{T}$. We build the predictor $\hat{f}_{\eta,T}\left(\cdot\left|X\right.\right)$ following (\ref{equation:Gibbs_predictor}) with any $\eta>0$. For any $\varepsilon>0$ and any truncation level $C>0$, with $\pi_{0}$-probability at least $1-\varepsilon$ we have,
\begin{multline*}
R\left(\hat{f}_{\eta,T}\left(\cdot\left|X\right.\right)\right) \leq \inf\limits_{\rho\in\mathcal{M}_{+}^{1}\left(\Theta_{T}\right)}\left\{\rho\left[R\right]+\frac{2\K\left(\rho,\pi_{T}\right)}{\eta}\right\}+\frac{2\log\left(\displaystyle\frac{2}{\varepsilon}\right)}{\eta} \\
+\frac{1}{2\eta}\log\left(\EE\left[\exp\left(2\eta\left(\widetilde{R}-\widetilde{r}_{T}\right)\right)\right]\right)+\frac{1}{2\eta}\log\left(\EE\left[\exp\left(2\eta\left(\widetilde{r}_{T}-\widetilde{R}\right)\right)\right]\right)\\
+\frac{2}{\eta}\log\left(\EE\left[\exp\left(2\eta\sup_{\btheta\in\Theta_{T}}\left|r_{T}\left(\btheta\left|X\right.\right)-\widetilde{r}_{T}\left(\btheta\left|\widetilde{X}\right.\right)\right|\right)\right]\right)\;.
\end{multline*}
\end{lemma}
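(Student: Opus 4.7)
Write $\rho^\star = \pi_T\{-\eta r_T(\cdot|X)\}$ for the Gibbs posterior, so that by definition $\hat f_{\eta,T}(\cdot|X) = \rho^\star[f_\cdot]$. The first step uses Assumption~\ref{assumption:Lipschitz_loss}: since $\ell(y,z)=g(y-z)$ with $g$ convex, Jensen's inequality applied to the integral over $\btheta$ inside the expectation defining the prediction risk gives
\[
  R\bigl(\hat f_{\eta,T}(\cdot|X)\bigr) \le \rho^\star[R].
\]
It therefore suffices to bound $\rho^\star[R]$ in $\pi_0$-probability.

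For this I would run a Donsker--Varadhan argument using Lemma~\ref{lemma:Legendre_KB_divergence} twice. Decomposing $\rho^\star[R] = \rho^\star[R - r_T] + \rho^\star[r_T]$, the first piece is controlled by Lemma~\ref{lemma:Legendre_KB_divergence} applied with energy $h=\eta(R - r_T)$ against $\rho^\star$, producing a $+\K(\rho^\star,\pi_T)/\eta$ contribution; the second piece is controlled by the variational characterisation of $\rho^\star$ as the minimiser of $\rho \mapsto \eta\rho[r_T] + \K(\rho,\pi_T)$, which gives $\rho^\star[r_T] \le \rho[r_T] + \K(\rho,\pi_T)/\eta - \K(\rho^\star,\pi_T)/\eta$ for any $\rho$. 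The two $\K(\rho^\star,\pi_T)/\eta$ terms cancel on summation. Writing $\rho[r_T] = \rho[R] + \rho[r_T - R]$ and invoking Lemma~\ref{lemma:Legendre_KB_divergence} a second time with $h=\eta(r_T - R)$ against $\rho$ introduces a further $\K(\rho,\pi_T)/\eta$, which explains the factor $2$ in front of the Kullback term. Taking the infimum over $\rho$ yields
\[
  \rho^\star[R] \le \inf_{\rho}\!\Big\{\rho[R] + \tfrac{2\K(\rho,\pi_T)}{\eta}\Big\} + \tfrac{1}{\eta}\log\pi_T[\exp(\eta(R - r_T))] + \tfrac{1}{\eta}\log\pi_T[\exp(\eta(r_T - R))].
\]

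To turn this deterministic bound into a $\pi_0$-probability statement, I would apply Markov's inequality to each of the non-negative random variables $\pi_T[\exp(\pm\eta(R - r_T))]$ at level $\varepsilon/2$ and union-bound the two events. This produces the additive $2\log(2/\varepsilon)/\eta$ term and replaces each log-Laplace by the log of its $\pi_0$-expectation, which by Fubini equals $\log\EE[\pi_T[\exp(\cdot)]]$.

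The final step, which I expect to be the most delicate in bookkeeping, is to relate these $\eta$-Laplace transforms to the $2\eta$-Laplace transforms of the truncated quantities appearing in the statement. The key input is the identity $R(\btheta) - \widetilde R(\btheta) = \EE[r_T(\btheta|X) - \widetilde r_T(\btheta|\widetilde X)]$, which provides the \emph{deterministic} pointwise bound $|R - \widetilde R|(\btheta) \le \EE[S]$ with $S = \sup_{\btheta\in\Theta_T}|r_T - \widetilde r_T|$. Combined with the trivial $|r_T - \widetilde r_T|(\btheta) \le S$ and a two-term telescoping through $\widetilde R$ and $\widetilde r_T$, this gives the pointwise bound $R - r_T \le (\widetilde R - \widetilde r_T) + \EE[S] + S$, together with the symmetric bound for $r_T - R$. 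Exponentiating, moving the deterministic $\exp(\eta\EE[S])$ inside the expectation via Jensen, and applying Cauchy--Schwarz to split the remaining product of $\exp(\eta(\widetilde R - \widetilde r_T))$ and $\exp(\eta S)$ turns every $\eta$ into $2\eta$ and produces the coefficients $\tfrac{1}{2\eta}$ (for the two truncated-risk Laplace transforms) and $\tfrac{2}{\eta}$ (for the supremum Laplace transform). Assembling the pieces gives the claimed inequality; the main care required is tracking which expectations are over $X$ and which are over $\btheta$, and ensuring the factors of two line up so that a single Cauchy--Schwarz delivers both of the needed exponents.
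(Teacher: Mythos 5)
Your proposal is correct and follows essentially the same route as the paper: Jensen's inequality to reduce $R(\hat f_{\eta,T})$ to the Gibbs posterior risk, the Legendre-transform duality of Lemma~\ref{lemma:Legendre_KB_divergence} applied twice (once for $R-r_T$, once for $r_T-R$, which is where the factor $2$ on the Kullback term comes from), Markov's inequality at level $\varepsilon/2$ with a union bound, and the decomposition $R-r_T=(\widetilde R-\widetilde r_T)+(R-\widetilde R)-(r_T-\widetilde r_T)$ handled by Jensen plus Cauchy--Schwarz to convert $\eta$ into $2\eta$ and produce the exponents $\tfrac{1}{2\eta}$ and $\tfrac{2}{\eta}$. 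The only difference is cosmetic ordering --- you derive the deterministic PAC-Bayes oracle bound first and randomize afterwards, while the paper applies Markov to the Legendre supremum before specializing $\rho$ to the Gibbs measure --- and the bookkeeping you sketch at the end does line up as claimed.
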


\begin{proof}
We use Tonelli's theorem and Jensen's inequality with the convex function $g$ to obtain an upper bound for $R\left(\hat{f}_{\eta,T}\left(\cdot\left|X\right.\right)\right)$
\begin{multline*}
R\left(\hat{f}_{\eta,T}\left(\cdot\left|X\right.\right)\right) = \int\limits_{\mathcal{X}^{\Z}}g\left(\int\limits_{\Theta_{T}}\left(f_{\btheta}\left(\left(y_{t-i}\right)_{i\geq 1}\right)-y_{t}\right)\pi_{T}\left\{-\eta r_{T}\left(\cdot\left|X\right.\right)\right\}\left(\rmd\btheta\right)\right)\pi_{0}\left(\rmd\bm y\right)\\
\leq \int\limits_{\mathcal{X}^{\Z}}\left[\int\limits_{\Theta_{T}}g\left(f_{\btheta}\left(\left(y_{t-i}\right)_{i\geq 1}\right)-y_{t}\right)\pi_{T}\left\{-\eta r_{T}\left(\cdot\left|X\right.\right)\right\}\left(\rmd\btheta\right)\right]\pi_{0}\left(\rmd\bm y\right)\\
= \int\limits_{\Theta_{T}}\left[\int\limits_{\mathcal{X}^{\Z}}g\left(f_{\btheta}\left(\left(y_{t-i}\right)_{i\geq 1}\right)-y_{t}\right)\pi_{0}\left(\bm y\right)\right]\pi_{T}\left\{-\eta r_{T}\left(\cdot\left|X\right.\right)\right\}\left(\rmd\btheta\right) = \pi_{T}\left\{-\eta r_{T}\left(\cdot\left|X\right.\right)\right\}\left[R\right] \;.
\end{multline*}

In the remainder of this proof we search for upper bounding $\pi_{T}\left\{-\eta r_{T}\left(\cdot\left|X\right.\right)\right\}\left[R\right]$.

First, we use the relationship:
\begin{eqnarray}
R-r_{T}\left(\cdot\left|X\right.\right)=\left(\widetilde{R}-\widetilde{r}_{T}\left(\cdot\left|\widetilde{X}\right.\right)\right)+\left(R-\widetilde{R}\right)-\left(r_{T}\left(\cdot\left|X\right.\right)-\widetilde{r}_{T}\left(\cdot\left|\widetilde{X}\right.\right)\right)\;. \label{equation:R_r_T}
\end{eqnarray}

For the sake of simplicity and while it does not disrupt the clarity, we lighten the notation of $r_{T}$ and $\widetilde{r}_{T}$. We now suppose that in the place of $\btheta$ we have a random variable distributed as $\pi_{T}\in\mathcal{M}_{+}^{1}\left(\Theta_{T}\right)$. This is taken into account in the following expectations. The identity (\ref{equation:R_r_T}) and the Cauchy-Schwarz inequality lead to

\begin{multline} 
\EE\left[\exp\left(\frac{\eta}{2}\left(R-r_{T}\right)\right)\right] = \EE\left[\exp\left(\frac{\eta}{2}\left(\widetilde{R}-\widetilde{r}_{T}\right)\right)\exp\left(\frac{\eta}{2}\left(\left(R-\widetilde{R}\right)-\left(r_{T}-\widetilde{r}_{T}\right)\right)\right)\right] \\
\leq \left(\EE\left[\exp\left(\eta\left(\widetilde{R}-\widetilde{r}_{T}\right)\right)\right]\EE\left[\exp\left(\eta\left(\left(R-\widetilde{R}\right)-\left(r_{T}-\widetilde{r}_{T}\right)\right)\right)\right]\right)^{1/2} \\
\leq \left(\EE\left[\exp\left(\eta\left(\widetilde{R}-\widetilde{r}_{T}\right)\right)\right]\EE\left[\exp\left(\eta\sup\limits_{\btheta\in\Theta_{T}}\left|\left(R-\widetilde{R}\right)\left(\btheta\right)-\left(r_{T}-\widetilde{r}_{T}\right)\left(\btheta\right)\right|\right)\right]\right)^{1/2} \label{inequality:R_r_n_Cauchy} \;. 
\end{multline}
Observe now that $R\left(\btheta\right)=\EE\left[r_{T}\left(\btheta\left|X\right.\right)\right]$ and $\widetilde{R}\left(\btheta\right)=\EE[\widetilde{r}_{T}(\btheta|\widetilde{X})]$. Jensen's inequality for the exponential function gives that 
\begin{align}
\exp\left(\eta\sup\limits_{\btheta\in\Theta_{T}}\left|R\left(\btheta\right)-\widetilde{R}\left(\btheta\right)\right|\right) &\leq \exp\left(\eta\EE\left[\sup\limits_{\btheta\in\Theta_{T}}\left|r_{T}\left(\btheta\left|X\right.\right)-\widetilde{r}_{T}\left(\btheta\left|\widetilde{X}\right.\right)\right|\right]\right) \notag\\
&\leq \EE\left[\exp\left(\eta\sup\limits_{\btheta\in\Theta_{T}}\left|r_{T}\left(\btheta\left|X\right.\right)-\widetilde{r}_{T}\left(\btheta\left|\widetilde{X}\right.\right)\right|\right)\right] \;.\label{inequality_R_tilde_R}
\end{align}

From (\ref{inequality_R_tilde_R}) we see that 
\begin{multline}
\EE\left[\exp\left(\eta\sup\limits_{\btheta\in\Theta_{T}}\left|\left(R-\widetilde{R}\right)\left(\btheta\right)-\left(r_{T}-\widetilde{r}_{T}\right)\left(\btheta\right)\right|\right)\right] \\
\leq \EE\left[\exp\left(\eta\sup\limits_{\btheta\in\Theta_{T}}\left|R\left(\btheta\right)-\widetilde{R}\left(\btheta\right)\right|\right)\exp\left(\eta\sup\limits_{\btheta\in\Theta_{T}}\left|r_{T}\left(\btheta\left|X\right.\right)-\widetilde{r}_{T}\left(\btheta\left|\widetilde{X}\right.\right)\right|\right)\right] \\
\leq \left(\EE\left[\exp\left(\eta\sup\limits_{\btheta\in\Theta_{T}}\left|r_{T}\left(\btheta\left|X\right.\right)-\widetilde{r}_{T}\left(\btheta\left|\widetilde{X}\right.\right)\right|\right)\right]\right)^{2} \;. \label{inequality:quotient}
\end{multline}

Combining (\ref{inequality:R_r_n_Cauchy}) and (\ref{inequality:quotient}) we obtain 
\begin{multline}
\EE\left[\exp\left(\frac{\eta}{2}\left(R-r_{T}\left(\cdot\left|X\right.\right)\right)\right)\right] \leq \left(\EE\left[\exp\left(\eta\left(\widetilde{R}-\widetilde{r}_{T}\right)\right)\right]\right)^{1/2}\\
\EE\left[\exp\left(\eta\sup\limits_{\btheta\in\Theta_{T}}\left|r_{T}\left(\btheta\left|X\right.\right)-\widetilde{r}_{T}\left(\btheta\left|\widetilde{X}\right.\right)\right|\right)\right] \label{equation:bound_R_r_T} \;. 
\end{multline}

Let $L_{\eta,T,C}=\log((\EE[\exp(\eta(\widetilde{R}-\widetilde{r}_{T}))])^{1/2}\EE[\exp(\eta\sup_{\btheta\in\Theta_{T}}|r_{T}(\btheta|X)-\widetilde{r}_{T}(\btheta|\widetilde{X})|)])$. Remark that the left term of (\ref{equation:bound_R_r_T}) is equal to the integral of the expression enclosed in brackets with respect to the measure $\pi_{0}\times\pi_{T}$. Changing $\eta$ by $2\eta$ and thanks to Lemma~\ref{lemma:Legendre_KB_divergence} we get
\begin{eqnarray*}
\pi_{0}\left[\exp\left(\sup\limits_{\rho\in\mathcal{M}_{+}^{1}\left(\Theta_{T}\right)}\left(\eta\rho[R-r_{T}\left(\cdot\left|X\right.\right)]-\K\left(\rho,\pi_{T}\right)\right)\right)\right] \leq \exp\left(L_{2\eta,T,C}\right) \;.
\end{eqnarray*}

Markov's inequality implies that for all $\varepsilon>0$, with $\pi_{0}$- probability at least $1-\varepsilon$
\begin{eqnarray*}
\sup\limits_{\rho\in\mathcal{M}_{+}^{1}\left(\Theta_{T}\right)}\left(\eta\rho\left[R-r_{T}\left(\cdot\left|X\right.\right)\right]-\K\left(\rho,\pi_{T}\right)\right)-\log\left(\frac{1}{\varepsilon}\right)-L_{2\eta,T,C} \leq 0 \;.
\end{eqnarray*}

Hence, for any $\pi_{T}\in\mathcal{M}_{+}^{1}\left(\Theta_{T}\right)$ and $\eta>0$, with $\pi_{0}$- probability at least $1-\varepsilon$, for all $\rho\in\mathcal{M}_{+}^{1}\left(\Theta_{T}\right)$
\begin{eqnarray}
\rho\left[R-r_{T}\left(\cdot\left|X\right.\right)\right]-\frac{1}{\eta}\K\left(\rho,\pi_{T}\right)-\frac{1}{\eta}\log\left(\frac{1}{\varepsilon}\right)-\frac{L_{2\eta,T,C}}{\eta} &\leq& 0\;. \label{inequality:R_r_T}
\end{eqnarray}
By setting $\rho=\pi_{T}\{-\eta r_{T}\left(\cdot\left|X\right.\right)\}$ and relying on Lemma~\ref{lemma:Legendre_KB_divergence}, we have
\begin{eqnarray*}
\K\left(\pi_{T}\left\{-\eta r_{T}\right\},\pi_{T}\right) &=& \pi_{T}\left\{-\eta r_{T}\right\}\left[\log\frac{\rmd\pi_{T}\left\{-\eta r_{T}\right\}}{\rmd\pi_{T}}\right] = \pi_{T}\left\{-\eta r_{T}\right\}\left[\log\frac{\exp\left(-\eta r_{T}\right)}{\pi_{T}\left[\exp\left(-\eta r_{T}\right)\right]}\right] \\
&=& \pi_{T}\left\{-\eta r_{T}\right\}\left[-\eta r_{T}\right]-\log\left(\pi_{T}\left[\exp\left(-\eta r_{T}\right)\right]\right) \\
&=& \pi_{T}\left\{-\eta r_{T}\right\}\left[-\eta r_{T}\right]+\inf\limits_{\rho\in\mathcal{M}_{+}^{1}\left(\Theta_{T}\right)}\left\{\rho\left[\eta r_{T}\right]+\K\left(\rho,\pi_{T}\right)\right\} \\
\end{eqnarray*}
Using (\ref{inequality:R_r_T}) with $\rho=\pi_{T}\{-\eta r_{T}\left(\cdot\left|X\right.\right)\}$ it follows that, with $\pi_{0}$- probability at least $1-\varepsilon$,
\begin{align*}
\pi_{T}\left\{-\eta r_{T}\left(\cdot\left|X\right.\right)\right\}\left[R\right] \leq& \inf\limits_{\rho\in\mathcal{M}_{+}^{1}\left(\Theta_{T}\right)}\left\{\rho\left[r_{T}\left(\cdot\left|X\right.\right)\right]+\frac{\K\left(\rho,\pi_{T}\right)}{\eta}\right\}+\frac{\log\left(\displaystyle\frac{1}{\varepsilon}\right)}{\eta}+\frac{L_{2\eta,T,C}}{\eta}\;.
\end{align*}

To upper bound $\rho[r_{T}(\cdot|X)]$ we use an upper bond on $\rho\left[r_{T}(\cdot|X)-R\right]$. We obtain an inequality similar to (\ref{inequality:R_r_T}) with $\rho\left[R-r_{T}(\cdot|X)\right]$ replaced by $\rho\left[r_{T}(\cdot|X)-R\right]$ and $L_{\eta,T,C}$ replaced by $L'_{\eta,T,C}=\log((\EE[\exp(\eta(\widetilde{r}_{T}-\widetilde{R}))])^{1/2}\EE[\exp(\eta\sup_{\btheta\in\Theta_{T}}|r_{T}(\btheta|X)-\widetilde{r}_{T}(\btheta|\widetilde{X})|)])$. This provides us another inequality satisfied with $\pi_{0}$- probability at least $1-\varepsilon$. To obtain a $\pi_{0}$- probability of the intersection larger than $1-\varepsilon$ we apply previous computations with $\varepsilon/2$ instead of $\varepsilon$ and hence,
\begin{multline*}
\pi_{T}\left\{-\eta r_{T}\left(\cdot\left|X\right.\right)\right\}\left[R\right] \leq \inf\limits_{\rho\in\mathcal{M}_{+}^{1}\left(\Theta_{T}\right)}\left\{\rho\left[R\right]+\frac{2\K\left(\rho,\pi_{T}\right)}{\eta}\right\}+\frac{2\log\left(\displaystyle\frac{2}{\varepsilon}\right)}{\eta}\\
+\frac{1}{2\eta}\log\left(\EE\left[\exp\left(2\eta\left(\widetilde{R}-\widetilde{r}_{T}\right)\right)\right]\right)+\frac{1}{2\eta}\log\left(\EE\left[\exp\left(2\eta\left(\widetilde{r}_{T}-\widetilde{R}\right)\right)\right]\right)\\
+\frac{2}{\eta}\log\left(\EE\left[\exp\left(2\eta\sup_{\btheta\in\Theta_{T}}\left|r_{T}\left(\btheta\left|X\right.\right)-\widetilde{r}_{T}\left(\btheta\left|\widetilde{X}\right.\right)\right|\right)\right]\right)\;.
\end{multline*} 
\end{proof}

We can now proof Theorem~\ref{theorem:oracle_Gibbs}.
\begin{proof}
Let $\pi_{0,C}$ denote the distribution on $\mathcal{X}^{\Z}\times\mathcal{X}^{\Z}$ of the couple $(X,\widetilde{X})$. Fubini's theorem and (\ref{inequality:Laplace_R_r}) of Lemma~\ref{lemma:Laplace_risk} imply that 
\begin{multline}
\EE\left[\exp\left(2\eta\left(\widetilde{R}-\widetilde{r}_{T}\right)\right)\right] = \pi_{0,C}\times\pi_{T}\left[\exp\left(2\eta\left(\widetilde{R}-\widetilde{r}_{T}\right)\right)\right] = \pi_{T}\times\pi_{0,C}\left[\exp\left(2\eta\left(\widetilde{R}-\widetilde{r}_{T}\right)\right)\right] \\
\leq \exp\left(\frac{16\eta^{2}k^{2}(T,C)}{T}\right) \;. \label{inequality:R_r_risk_Kullback}
\end{multline}
Using (\ref{inequality:Laplace_r_R}), we analogously get
\begin{eqnarray}
\EE\left[\exp\left(2\eta\left(\widetilde{r}_{T}-\widetilde{R}\right)\right)\right] \leq \exp\left(\frac{16\eta^{2}k^{2}(T,C)}{T}\right)\;. \label{inequality:r_R_risk_Kullback}
\end{eqnarray}
Consider the set of probability measures $\left\{\rho_{\btheta_{T},\Delta}, T\geq 2,0\leq\Delta\leq \Delta_{T}\right\}\subset \mathcal{M}_{+}^{1}\left(\Theta_{T}\right)$, where $\btheta_{T}$ is the parameter defined by Assumption~\ref{assumption:theta_balls} and $\rho_{\btheta_{T},\Delta}\left(\btheta\right)\propto \pi_{T}\left(\btheta\right)\1_{B\left(\btheta_{T},\Delta\right)\cap\Theta_{T}}\left(\btheta\right)$. Lemma~\ref{lemma:risk_bound_inf_probability}, together with Lemma~\ref{lemma:exponential_r_T}, (\ref{inequality:R_r_risk_Kullback}) and (\ref{inequality:r_R_risk_Kullback}) guarantee that for all $0<\eta\leq T/8\left(1+L_{T}\right)$
\begin{multline}
R\left(\hat{f}_{\eta,T}\left(\cdot\left|X\right.\right)\right) \leq \inf\limits_{0\leq\Delta\leq \Delta_{T}}\left\{\rho_{\btheta_{T},\Delta}\left[R\right]+\frac{2\K\left(\rho_{\btheta_{T},\Delta},\pi_{T}\right)}{\eta}\right\}+\frac{16\eta k^{2}(T,C)}{T}+\frac{2\log\left(\displaystyle\frac{2}{\varepsilon}\right)}{\eta}+\\
 4\varphi(T,C,2\eta) \;. \label{equation:risk_ball}
\end{multline}
Thanks to assumptions \ref{assumption:Lipschitz_loss} and \ref{assumption:theta_Lipschitz}, for any $T\geq 2$ and $\btheta\in B\left(\btheta_{T},\Delta\right)$ 
\begin{eqnarray}
R\left(\btheta\right)-R\left(\btheta_{T}\right) \leq K\pi_{0}\left[\left|\left|f_{\btheta}\left(\left(Y_{t-i}\right)_{i\geq 1}\right)-f_{\btheta_{T}}\left(\left(Y_{t-i}\right)_{i\geq 1}\right)\right|\right|\right] \leq K\mathcal{D}d_{T}^{1/2}\Delta\;. \label{equation:bound_difference_R}
\end{eqnarray}
For $T\geq 4$ Assumption~\ref{assumption:theta_balls} gives
\begin{eqnarray}
\K\left(\rho_{\btheta_{T},\Delta},\pi_{T}\right) = \log\left(\frac{1}{\pi_{T}\left[B\left(\btheta_{T},\Delta\right)\cap\Theta_{T}\right]}\right) \leq -n_{T}^{1/\gamma}\log\left(\Delta\right)-\log\left(\mathcal{C}_{2}\right) \;. \label{equation:bound_Kullback}
\end{eqnarray}
Plugging (\ref{equation:bound_difference_R}) and (\ref{equation:bound_Kullback}) into (\ref{equation:risk_ball}) and using again Assumption~\ref{assumption:theta_balls}
\begin{align}
R\left(\hat{f}_{\eta,T}\left(\cdot\left|X\right.\right)\right) \leq& R\left(\btheta_{T}\right)+\inf\limits_{0\leq\Delta\leq \Delta_{T}}\left\{\mathcal{E}_{1}d_{T}^{1/2}\Delta-\frac{2n_{T}^{1/\gamma}\log\left(\Delta\right)}{\eta}\right\}  +\frac{\mathcal{E}_{2}\eta\left(1+L_{T}\right)^{2} C^{2}}{T}\nonumber \\
&  +\frac{\mathcal{E}_{3}\left(1+L_{T}\right)C}{\exp\left(A_{*}C\right)-1}+\frac{2\log\left(\displaystyle\frac{2}{\varepsilon}\right)-2\log\left(\mathcal{C}_{2}\right)}{\eta}+\frac{\mathcal{E}_{4}\left(1+L_{T}\right)^{2}\eta}{T} \label{equation:bound_risk_general_Gibbs}
\end{align}
where $\mathcal{E}_{1}=K\mathcal{D}$, $\mathcal{E}_{2}=32K^{2}\left(A_{*}+\tilde{A}_{*}\right)^{2}$, $\mathcal{E}_{3}=8K\phi(A_{*})A_{*}$ and $\mathcal{E}_{4}=32K^{2}\phi(A_{*})$.

We upper bound $d_{T}$ by $T/2$, $n_{T}$ by $\log^{\gamma}T$ and substitute $\Delta_{T}=\mathcal{C}_{3}/T$. Since it is difficult to minimize the right term of (\ref{equation:bound_risk_general_Gibbs}) with respect to $\eta$ and $C$ at the same time, we evaluate them in certain values to obtain a convenient upper bound. 

At a fixed $\varepsilon$, the convergence rate of $\left[2\log\left(2/\varepsilon\right)-2\log\left(\mathcal{C}_{2}\right)\right]/\eta+\mathcal{E}_{4}\left(1+L_{T}\right)^{2}\eta/T$ is at best $\log T/T^{1/2}$, and we get it doing $\eta\propto T^{1/2}/\log T$. As $\eta\leq T/8(1+L_{T})$ we set $\eta=\eta_{T}=T^{1/2}/(4\log T)$.

The order of the already chosen terms is $\log^{3}T/T^{1/2}$, doing $C=\log T/A_{*}$ we preserve it. 
Taking into account that $R\left(\btheta_{T}\right)\leq \inf_{\btheta\in\Theta_{T}}R\left(\btheta\right)+\mathcal{C}_{1}\log^{3}T/T^{1/2}$ the result follows.
\end{proof}

\subsection{Proof of Proposition~\ref{proposition:numerical_oracle_Gibbs}} \label{subsection:proof_proposition_numerical_oracle_Gibbs}

Considering that Assumption~\ref{assumption:Lipschitz_loss} holds we get 
\begin{align*}
\left|R\left(\bar{f}_{\eta,T,n}\left(\cdot\left|X\right.\right)\right)-R\left(\hat{f}_{\eta,T}\left(\cdot\left|X\right.\right)\right)\right| 
\leq K\int\limits_{\mathcal{X}^{\Z}}\left|\bar{f}_{\eta,T,n}\left(\bm y\left|X\right.\right)-\hat{f}_{\eta,T}\left(\bm y\left|X\right.\right)\right|\pi_{0}\left(\rmd\bm y\right) 
\end{align*}

Observe that the last expression depends on $X_{1:T}$ and $\Phi_{\eta,T}\left(X\right)$. We bound the expectation to infer a bound in probability. 

Tonelli's theorem and Jensen's inequality lead to 
\begin{multline}
\nu_{\eta,T}\left[\left|R\left(\bar{f}_{\eta,T,n}\left(\cdot\left|X\right.\right)\right)-R\left(\hat{f}_{\eta,T}\left(\cdot\left|X\right.\right)\right)\right|\right] \leq \\
K\int\limits_{\mathcal{X}^{\Z}}\int\limits_{\mathcal{X}^{\Z}}\left(\int\limits_{\Theta_{T}^{\N}}\left|\bar{f}_{\eta,T,n}\left(\bm y\left|\bm x\right.\right)-\hat{f}_{\eta,T}\left(\bm y\left|\bm x\right.\right)\right|^{2}\mu_{\eta,T}\left(\rmd\bm\phi\left|\bm x\right.\right)\right)^{1/2}\pi_{0}\left(\rmd\bm y\right)\pi_{0}\left(\rmd\bm x\right)\;. \label{inequality:expectation_risk_mumerical_theoretical}
\end{multline}

We are then interested in upper bounding the expression under the square root.
To that end, we use \cite[Theorem 3.1]{Latuszynski_Niemiro:2011} which implies that for any $\bm x$
\begin{multline*}
\int\limits_{\Theta_{T}^{\N}}\left|\bar{f}_{\eta,T,n}\left(\bm y\left|\bm x\right.\right)-\hat{f}_{\eta,T}\left(\bm y\left|\bm x\right.\right)\right|^{2}\mu_{\eta,T}\left(\rmd\bm\phi\left|\bm x\right.\right) \leq \\ \sup\limits_{\btheta\in\Theta_{T}}\left(f_{\btheta}\left(\bm y\right)-\hat{f}_{\eta,T}\left(\bm y\left|\bm x\right.\right)\right)^{2}\left(\frac{4}{\beta_{\eta,T}\left(\bm x\right)}-3\right)\left(\frac{1}{n}+\frac{2}{n^{2}\beta_{\eta,T}\left(\bm x\right)}\right)\;.
\end{multline*}
Plugging this on ($\ref{inequality:expectation_risk_mumerical_theoretical}$), using that $n\geq 1$ and that
$$
\left(\left(4-3\beta_{\eta,T}\left(\bm x\right)\right)\left(2+\beta_{\eta,T}\left(\bm x\right)\right)\right)^{1/2}\leq 3\;,
$$
we obtain the following 
\begin{multline*}
\nu_{\eta,T}\left[\left|R\left(\bar{f}_{\eta,T,n}\left(\cdot\left|X\right.\right)\right)-R\left(\hat{f}_{\eta,T}\left(\cdot\left|X\right.\right)\right)\right|\right] \leq \\
\frac{3K}{n^{1/2}}\int\limits_{\mathcal{X}^{\Z}}\frac{1}{\beta_{\eta,T}\left(\bm x\right)}\int\limits_{\mathcal{X}^{\Z}}\sup\limits_{\btheta\in\Theta_{T}}\left|f_{\btheta}\left(\bm y\right)-\hat{f}_{\eta,T}\left(\bm y\left|\bm x\right.\right)\right|\pi_{0}\left(\rmd\bm y\right)\pi_{0}\left(\rmd\bm x\right) \,.
\end{multline*}
The result follows from Markov's inequality.

\bibliographystyle{plain}
\bibliography{allbib}

\begin{thebibliography}{10}

\bibitem{Alquier_Li:2012}
Pierre Alquier and Xiaoyin Li.
\newblock Prediction of quantiles by statistical learning and application to
  gdp forecasting.
\newblock In Jean-Gabriel Ganascia, Philippe Lenca, and Jean-Marc Petit,
  editors, {\em Discovery Science}, volume 7569 of {\em Lecture Notes in
  Computer Science}, pages 22--36. Springer Berlin Heidelberg, 2012.

\bibitem{Alquier_Wintenberger:2012}
Pierre Alquier and Olivier Wintenberger.
\newblock Model selection for weakly dependent time series forecasting.
\newblock {\em Bernoulli}, 18(3):883--913, 2012.

\bibitem{Andrieu_Doucet:1999}
Christophe Andrieu and Arnaud Doucet.
\newblock An improved method for uniform simulation of stable minimum phase
  real arma (p,q) processes.
\newblock {\em Signal Processing Letters, IEEE}, 6(6):142--144, june 1999.

\bibitem{Atchade:2006}
Yves~F. Atchad{\'e}.
\newblock An adaptive version for the {M}etropolis adjusted {L}angevin
  algorithm with a truncated drift.
\newblock {\em Methodol. Comput. Appl. Probab.}, 8(2):235--254, 2006.

\bibitem{Audibert:2004}
Jean-Yves Audibert.
\newblock {\em PAC-Bayesian Statistical Learning Theory}.
\newblock PhD thesis, Universit{\'e} Pierre et Marie Curie-Paris VI, 2004.

\bibitem{Beadle_Djuric:1997}
Edward~R. Beadle and Petar~M. Djuri{\'c}.
\newblock Uniform random parameter generation of stable minimum-phase real arma
  (p,q) processes.
\newblock {\em Signal Processing Letters, IEEE}, 4(9):259--261, september 1999.

\bibitem{Brockwell_Davis:2006}
Peter~J. Brockwell and Richard~A. Davis.
\newblock {\em Time series: theory and methods}.
\newblock Springer Series in Statistics. Springer, New York, 2006.
\newblock Reprint of the second (1991) edition.

\bibitem{Catoni:2004}
Olivier Catoni.
\newblock {\em Statistical learning theory and stochastic optimization}, volume
  1851 of {\em Lecture Notes in Mathematics}.
\newblock Springer-Verlag, Berlin, 2004.
\newblock Lecture notes from the 31st Summer School on Probability Theory held
  in Saint-Flour, July 8--25, 2001.

\bibitem{Cesa-Bianchi_Lugosi:2006}
Nicol{\`o} Cesa-Bianchi and G{\'a}bor Lugosi.
\newblock {\em Prediction, learning, and games}.
\newblock Cambridge University Press, Cambridge, 2006.

\bibitem{Coulon-Prieur_Doukhan:2000}
Cl{\'e}mentine Coulon-Prieur and Paul Doukhan.
\newblock A triangular central limit theorem under a new weak dependence
  condition.
\newblock {\em Statist. Probab. Lett.}, 47(1):61--68, 2000.

\bibitem{Dalalyan_Tsybakov:2008}
Arnak~S. Dalalyan and Alexandre~B. Tsybakov.
\newblock Aggregation by exponential weighting, sharp pac-bayesian bounds and
  sparsity.
\newblock {\em Machine Learning}, 72(1-2):39--61, 2008.

\bibitem{Dedecker_Doukhan_Lang:2007}
J{\'e}r{\^o}me Dedecker, Paul Doukhan, Gabriel Lang, Jos{\'e}~Rafael
  Le{\'o}n~R., Sana Louhichi, and Cl{\'e}mentine Prieur.
\newblock {\em Weak dependence: with examples and applications}, volume 190 of
  {\em Lecture Notes in Statistics}.
\newblock Springer, New York, 2007.

\bibitem{Dedecker_Prieur:2005}
J{\'e}r{\^o}me Dedecker and Cl{\'e}mentine Prieur.
\newblock New dependence coefficients. {E}xamples and applications to
  statistics.
\newblock {\em Probab. Theory Related Fields}, 132(2):203--236, 2005.

\bibitem{Kunsch:1995}
Hans~Rudolf K\"{u}nsch.
\newblock A note on causal solutions for locally stationary ar-processes.
\newblock 1995.

\bibitem{Latuszynski_Miasojedow_Niemiro:2013}
Krzysztof {\L}atuszy{\'n}ski, Blazej Miasojedow, and Wojciech Niemiro.
\newblock Nonasymptotic bounds on the estimation error of mcmc algorithms.
\newblock {\em Bernoulli}, 2013.

\bibitem{Latuszynski_Niemiro:2011}
Krzysztof {\L}atuszy{\'n}ski and Wojciech Niemiro.
\newblock Rigorous confidence bounds for {MCMC} under a geometric drift
  condition.
\newblock {\em J. Complexity}, 27(1):23--38, 2011.

\bibitem{Leung_Barron:2006}
Gilbert Leung and Andrew~R. Barron.
\newblock Information theory and mixing least-squares regressions.
\newblock {\em IEEE Trans. Inform. Theory}, 52(8):3396--3410, 2006.

\bibitem{Mengersen_Tweedie:1996}
K.~L. Mengersen and R.~L. Tweedie.
\newblock Rates of convergence of the {H}astings and {M}etropolis algorithms.
\newblock {\em Ann. Statist.}, 24(1):101--121, 1996.

\bibitem{Moulines_Priouret_Roueff:2005}
Eric Moulines, Pierre Priouret, and Fran{\c{c}}ois Roueff.
\newblock On recursive estimation for time varying autoregressive processes.
\newblock {\em Ann. Statist.}, 33(6):2610--2654, 2005.

\bibitem{Rio:2000}
Emmanuel Rio.
\newblock In\'egalit\'es de {H}oeffding pour les fonctions lipschitziennes de
  suites d\'ependantes.
\newblock {\em C. R. Acad. Sci. Paris S\'er. I Math.}, 330(10):905--908, 2000.

\bibitem{Roberts_Rosenthal:2004}
Gareth~O. Roberts and Jeffrey~S. Rosenthal.
\newblock General state space {M}arkov chains and {MCMC} algorithms.
\newblock {\em Probab. Surv.}, 1:20--71, 2004.

\end{thebibliography}

\end{document}